\newtheorem{theorem}{Theorem}
\newtheorem*{remark}{Remark}
\newtheorem{proposition}[theorem]{Proposition}
\theoremstyle{definition}
\newtheorem{definition}{Definition}
\title{Computing the Haar state on ${\mathcal{O}(SL_q(3))}$}
\author{Ting Lu}
\address{Math department, Texas A\&M University, College Station, TX 77843, USA}
\email{ting\_lu@tamu.edu}
\begin{document}
\begin{abstract}
This paper shows that to compute the Haar state on $\mathcal{O}(SL_q(n))$, it suffices to compute the Haar states of a special type of monomials which we define as standard monomials. Then, we provide an algorithm to explicitly compute the Haar states of standard monomials on $\mathcal{O}(SL_q(3))$ with reasonable computational cost. The numerical results on $\mathcal{O}(SL_q(3))$ will be used in the future study of the $q$-deformed Weingarten function. 

\smallskip
\noindent \textit{Keywords} --- 
Quantum groups; quantum special linear group; Haar state.
\end{abstract}

\maketitle

\section{Introduction}
\noindent The Haar measure on a compact topological group is a well-studied object. In particular, when the group is $U(n)$, the group of $n\times n$ unitary matrices, there is an elegant formula for the integral of matrix coefficients with respect to the Haar measure. This formula is given by so-called Weingarten functions, introduced by Collins in 2003~\cite{collins2003moments}. The current paper will study a $q$-deformation of the Haar measure on the Drinfeld--Jimbo~\cite{drinfeld1986quantum}~\cite{jimbo1985aq} quantum groups $\mathcal{O}(SL_q(n))$ which is dual to $U_q(sl_n)$~\cite{korogodski1998algebras}.

\hfill

\noindent In the context of $\mathcal{O}(SL_q(n))$, the most relevant algebraic structure is that it is a co-semisimple Hopf algebra. From Sweedler~\cite{sweedler1969hopf}, any co-semisimple Hopf algebra has a unique ``Haar state'' up to normalization. In the context here, co-semisimplicity plays the role of compactness: the Lie algebra of a compact Lie group is always a semisimple Lie algebra. $\mathcal{O}(SL_q(n))$ becomes a $^*$--Hopf algebra when setting $x_{i,j}^*=S(x_{j,i})$ where $S$ is the antipode. In this case, we call the $^*$--Hopf algebra $\mathcal{O}(SU_q(n))$. In particular, when $q\rightarrow 1$, the space of coordinate functions $O(SU(n))$ on $SU(n)$ is a co-semisimple Hopf algebra, and its Haar state is simply the integral of a function with respect to Haar measure on $SU(n)$. Thus, Haar states serves as the integral functional on these deformed $\mathcal{O}(SU_q(n))$'s. Since the Haar state on $\mathcal{O}(SU_q(n))$ is defined by extending the Haar state on $\mathcal{O}(SL_q(n))$ by the $^*$ operation, we will focus on the Haar state of $\mathcal{O}(SL_q(n))$ only in this paper.

\hfill

\noindent In the $q$-deformed case, there are no explicit formulas in terms of parameter $q$ for the Haar state $\mathcal{O}(SL_q(n))$ except when $n=2$ (Klimyk and Schm{\"u}dgen~\cite{klimyk2012quantum}). The difficulty when $n>2$ arises from the form of the $q$--determinant. When $n=2$, the $q$-determinant is of the form $ad-qbc=1$, where $a,b,c,d$ are the generators of $\mathcal{O}(SL_q(2))$. Because the $q$-determinant only has two terms, once the Haar state of $bc$ is computed, then so is the Haar state of $ad$. However, this simplification does not work in general because the $q$-determinant generally has $n!$ terms. For other related works on $\mathcal{O}(SL_q(n))$, see Nagy~\cite{nagy1993haar}, Vaksman and Soibelman~\cite{vaksman1990algebra}~\cite{levendorskii1991algebras}.

\hfill

\noindent In this paper, the generator of $\mathcal{O}(SL_q(n))$ is denoted as $x_{ij}$ for $1\le i,j\le n$. 
\begin{definition}
The {\bf{counting matrix}} of a monomial $x\in \mathcal{O}(SL_q(n))$, denoted as $\theta(x)$, is a $n\times n$ matrix with entries $a_{ij}, i,j=1,\dots,n$ where $a_{ij}$ equals the number of appearance of generator $x_{ij}$ in $x$.
\end{definition}
\begin{definition}
    The \textbf{{row sum}} and \textbf{{column sum}} of a $n\times n$ matrix $A=(a_{ij})_{i,j=1}^n$, denoted as $\alpha(A)$ and $\beta(A)$, are vectors in $\mathbb{R}^n$:
\begin{equation*}
\begin{split}
    \alpha(A)=\left( \sum_{j=1}^n a_{ij} \right)_{i=1}^n, \ \ \ \ \ \ 
    \beta(A)=\left( \sum_{i=1}^n a_{ij} \right)_{j=1}^n.
\end{split}
\end{equation*}
Here, we denote $(k)_{i=1}^n$ as a vector whose entries all equal to $k$.
\end{definition}
\begin{definition}
    Let $A$ be a $n\times n$ matrix with non-negative integer entries. Then $A$ is a \textbf{$\boldsymbol{k}$-doubly stochastic matrix}~\cite{stein1970enumeration} if there is a positive integer $k$ such that $\alpha(A)=(k)_{i=1}^n=\beta(A)$. 
\end{definition}
\begin{definition}
    Let $S_n$ be the permutation group on $n$ letters. Monomials in form $\prod_{\sigma_i\in S_n}(x_{\sigma_i})^{m_i}$, where $m_i\in \mathbb{N}_0$ and $x_{\sigma_i}=\prod_{k=1}^nx_{k\sigma_i(k)}$ and $(x_{\sigma_i})^0=1$, are called \textbf{standard monomials}. $m=\sum_{i=1}^{n!}m_i$ is called the \textbf{order} and each $x_{\sigma_i}$ is called a \textbf{segment}.
\end{definition}
\noindent The current paper will prove the following theorem on $\mathcal{O}(SL_q(n))$:
\begin{theorem}
The following are true on $\mathcal{O}(SL_q(n))$:
\begin{itemize}
    \item[a)] Let $x$ be a monomial. Then $h(x)\ne 0$ implies that there exist $k\in \mathbb{N}^+$ such that $\theta(x)$ is a ${k}$-doubly stochastic matrix.
    \item[b)] Every monomial with non-zero Haar state value can be written as a linear combination of standard monomials.
    \item[c)]Let $s_l$, $l\in \mathcal{I}_m$ be the set of standard monomials of order $m$. Then, we can write $(Id\otimes h)\circ\Delta(s_l)$ and $(h\otimes Id)\circ\Delta(s_l)$ as linear combinations of $s_j$'s and the coefficient of each $s_j$ is a linear combination of $h(s_i)$'s.
    \item[d)] Let $l(\tau)$ be the inverse number of $\tau\in S_n$. Then :
    \begin{equation*}
    h(\Pi_{k=1}^nx_{k,\tau(k)})=\frac{(-q)^{l(\tau)}}{\sum_{\sigma\in S_n} (-q)^{2l(\sigma)}}=\frac{(-q)^{l(\tau_i)}}{[n]_{q^2}!},
    \end{equation*}
    where $[n]_{q^2}=\frac{1-q^{2n}}{1-q^2}$ and $[n]_{q^2}!=\prod_{j=1}^n[j]_{q^2}$
    \item[e)] When changing the order of generators in a monomial, the newly generated monomials cannot contain more generator $x_{11}$ and $x_{nn}$ and cannot contain less generator $x_{1n}$ and $x_{n1}$, comparing to the monomial being reordered.
\end{itemize}
\end{theorem}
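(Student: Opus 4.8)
The common engine for all five parts is the bi-invariance of the Haar state, $(h\otimes\mathrm{id})\circ\Delta=h(\cdot)1=(\mathrm{id}\otimes h)\circ\Delta$, together with the defining (FRT) commutation relations of $\mathcal{O}(SL_q(n))$ and the relation $\det{}_q:=\sum_{\sigma\in S_n}(-q)^{l(\sigma)}x_\sigma=1$. I would establish (a) and (e) first and independently, then deduce (b) from (a), (c) from (a) and (b), and (d) from (c) together with co-semisimplicity. For (a), set $\mathcal{O}(D):=\mathbb{C}[t_1^{\pm1},\dots,t_n^{\pm1}]/(t_1\cdots t_n-1)$. The assignment $x_{ij}\mapsto\delta_{ij}t_i$ extends to a Hopf-algebra surjection $\phi:\mathcal{O}(SL_q(n))\to\mathcal{O}(D)$ (one checks the defining relations and $\det{}_q\mapsto1$). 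Since $\phi$ is an algebra map and $\Delta(x_{ij})=\sum_k x_{ik}\otimes x_{kj}$, the compositions $(\phi\otimes\mathrm{id})\Delta$ and $(\mathrm{id}\otimes\phi)\Delta$ are algebra maps sending $x_{ij}$ to $t_i\otimes x_{ij}$ and to $x_{ij}\otimes t_j$, so a monomial $x$ satisfies $(\phi\otimes\mathrm{id})\Delta(x)=\bigl(\prod_i t_i^{\alpha_i(\theta(x))}\bigr)\otimes x$ and $(\mathrm{id}\otimes\phi)\Delta(x)=x\otimes\bigl(\prod_j t_j^{\beta_j(\theta(x))}\bigr)$. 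Applying $\phi$ to the two Haar identities and comparing gives $h(x)\prod_i t_i^{\alpha_i(\theta(x))}=h(x)$ and $h(x)\prod_j t_j^{\beta_j(\theta(x))}=h(x)$ in $\mathcal{O}(D)$; if $h(x)\ne0$ this forces $\alpha(\theta(x))$ and $\beta(\theta(x))$ to be integer multiples of $(1,\dots,1)$, and since both sum to the degree of $x$ they equal $(k,\dots,k)$ for a common $k\ge1$ (the case $x=1$ being trivial), i.e. $\theta(x)$ is $k$-doubly stochastic.

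For (e), note that among the defining relations the only ones that change a counting matrix are $x_{ij}x_{kl}=x_{kl}x_{ij}+(q-q^{-1})x_{il}x_{kj}$ for $i<k$, $j<l$ (the same-row, same-column and "cross" relations only rescale or commute adjacent letters). Hence reordering a monomial produces, besides monomials with the same counting matrix, only correction monomials obtained by repeatedly replacing a pair $x_{ij},x_{kl}$ with $i<k$, $j<l$ by $x_{il},x_{kj}$. A one-line check of the inequalities shows that such a replacement cannot create $x_{11}$ (that would need $l\le j$ or $k\le i$), cannot create $x_{nn}$ (would need $i\ge k$ or $j\ge l$), cannot destroy $x_{1n}$ (would need $j\ge l$ or $k\le i$), and cannot destroy $x_{n1}$ (would need $i\ge k$ or $l\le j$) — all impossible under $i<k$, $j<l$. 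Thus each single move is monotone in the four corner counts in the asserted directions, and monotonicity is preserved under iteration, which gives (e).

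For (b), by (a) the counting matrix $\theta(x)$ is $k$-doubly stochastic, and by the König/Birkhoff theorem for nonnegative integer matrices there is a permutation $\tau$ with $P_\tau\le\theta(x)$ entrywise and $\theta(x)-P_\tau$ again $(k-1)$-doubly stochastic; more generally $\theta(x)$ is a sum of $k$ permutation matrices. I would prove the sharper statement that \emph{every} monomial with $k$-doubly stochastic counting matrix is a linear combination of order-$k$ standard monomials, by induction on $k$: since every defining relation preserves row and column sums, all monomials occurring in any rewriting stay $k$-doubly stochastic, so one works inside a fixed finite-dimensional space. The step uses the relations to split a $\tau$-segment $x_\tau$ off the front (sort the $k$ letters of one row, at the cost of correction monomials whose counting matrices are strictly smaller in a fixed term order on $k$-doubly stochastic matrices), then applies the induction hypothesis to the remaining length-$(k-1)n$ factor — noting that $x_\tau$ times an order-$(k-1)$ standard monomial is an order-$k$ standard monomial up to reordering segments, which again introduces only strictly smaller corrections — with a secondary induction on the term order disposing of all corrections; the base case $k\le1$ is that a monomial with a permutation counting matrix $P_\rho$ is a combination of the $x_\rho$'s, obtained by sorting its (pairwise distinct-row) letters by row, the $(q-q^{-1})$-corrections again carrying permutation counting matrices. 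For (c), expand $\Delta(s_l)$ multiplicatively: each term has the form $\bigl(\prod_\lambda x_{p_\lambda,r(\lambda)}\bigr)\otimes\bigl(\prod_\lambda x_{r(\lambda),q_\lambda}\bigr)$ over choices $r$ of middle indices for the $mn$ letters $\lambda$ of $s_l$; because $s_l$ uses each row and each column $m$ times, the left leg always has all row sums $m$, the right leg all column sums $m$, and the remaining sums of both legs equal $N_a:=|\{\lambda:r(\lambda)=a\}|$. By (a), $h$ kills the right leg unless $N_a=m$ for every $a$, in which case both legs are $m$-doubly stochastic, so the sharp form of (b) expresses the left leg as a combination of order-$m$ standard monomials $s_j$ and the right leg as a combination of $s_i$'s, whence $h(\text{right leg})$ is a linear combination of the $h(s_i)$. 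Summing the surviving terms puts $(\mathrm{id}\otimes h)\Delta(s_l)$ in the required form, and swapping the roles of the legs (now demanding the left leg be doubly stochastic) treats $(h\otimes\mathrm{id})\Delta(s_l)$.

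For (d), applying $h$ to $\det{}_q=1$ gives $\sum_{\sigma\in S_n}(-q)^{l(\sigma)}h(x_\sigma)=1$, so it remains to prove $h(x_\sigma)=(-q)^{l(\sigma)}h(x_e)$ for $e$ the identity. By co-semisimplicity (Sweedler), $h$ annihilates every non-trivial isotypic component of $\mathcal{O}(SL_q(n))$ and fixes $1$; the degree-$n$ monomial $x_\sigma=x_{1\sigma(1)}\cdots x_{n\sigma(n)}$ has constant $V$-content $e_1\otimes\cdots\otimes e_n$ and $V^*$-content $e^{\sigma(1)}\otimes\cdots\otimes e^{\sigma(n)}$, and the only component through which $h$ detects it is the one-dimensional piece $\Lambda^n_qV\otimes(\Lambda^n_qV)^*=\mathbb{C}\det{}_q$ (which is the trivial comodule since $\det{}_q=1$), onto which one projects via the quantum antisymmetrizers; since the quantum determinant is alternating under column permutations up to powers of $-q$, permuting the columns by $\sigma$ scaling it by $(-q)^{l(\sigma)}$ in the convention at hand, the projection of $x_\sigma$ onto $\mathbb{C}\det{}_q$ is $(-q)^{l(\sigma)}$ times that of $x_e$, giving $h(x_\sigma)=(-q)^{l(\sigma)}h(x_e)$ — equivalently this proportionality is forced by solving the order-one linear system coming from (c) and (a). Substituting yields $h(x_e)\sum_\sigma q^{2l(\sigma)}=1$, and the classical generating-function identity $\sum_{\sigma\in S_n}q^{2l(\sigma)}=[n]_{q^2}!$ for the inversion statistic gives $h(x_e)=1/[n]_{q^2}!$ and hence the stated formula. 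The two delicate points in the whole argument are the straightening in (b) — exhibiting a term order on $k$-doubly stochastic counting matrices under which the defining relations strictly decrease, so that splitting off and reordering segments terminates — and the proportionality in (d), i.e. rigorously isolating $\mathbb{C}\det{}_q$ as the only component $h$ sees and tracking the $q$-sign through the antisymmetrizers (equivalently, computing enough straightening coefficients in the order-one case of (c) to solve the resulting system); parts (a), (c) and (e) are then essentially bookkeeping built on these.
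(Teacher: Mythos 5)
Your parts (a), (b), (c) and (e) follow essentially the same route as the paper. For (a) you use the same Hopf surjection onto the diagonal torus and compare the two evaluations of the invariance identity; for (e) you check the same four corner inequalities on the replacement $x_{ij}x_{kl}\mapsto x_{il}x_{kj}$ with $i<k$, $j<l$; for (b) you combine Birkhoff--von Neumann with a straightening argument governed by a term order on counting matrices under which every correction term strictly decreases, which is exactly the paper's Lemma 1 (the paper uses lexicographic order on the vectorized counting matrix and straightens the whole monomial at once rather than peeling off segments, but the termination mechanism is identical); and your (c) is the paper's Lemma 2/Lemma 3 argument. These parts are correct.

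Part (d) is where you genuinely diverge, and it is also where your proposal is not yet a proof. The paper's derivation is elementary: comparing coefficients in $(\mathrm{id}\otimes h)\Delta(x_\tau)=h(x_\tau)D_q$ gives $h\bigl(\prod_k x_{\sigma(k),\tau(k)}\bigr)=(-q)^{l(\sigma)}h(x_\tau)$ for every $\sigma$, and the loop is closed by the specific choice $\sigma(k)=n+1-\tau(k)$, for which all the generators $x_{n+1-\tau(k),\tau(k)}$ lie on the antidiagonal and pairwise commute, so the left-hand side equals the single fixed quantity $h\bigl(\prod_k x_{k,n+1-k}\bigr)$ independently of $\tau$, with $l(\sigma)=\tfrac{n(n-1)}{2}-l(\tau)$. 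Your primary route instead asserts $h(x_\sigma)=(-q)^{l(\sigma)}h(x_e)$ by projecting onto $\mathbb{C}\det_q$ inside the Peter--Weyl decomposition. That route can be made to work, but all of the content sits in the step you leave as a sketch: computing the coefficient of $\det_q$ in the coalgebra decomposition of $x_\sigma$ requires the explicit dual invariant vector of $\Lambda^n_qV$ in $(V^*)^{\otimes n}$, and whether the resulting factor is $(-q)^{l(\sigma)}$ or $(-q)^{-l(\sigma)}$ depends on the R-matrix convention; ``in the convention at hand'' is precisely what needs to be verified. Note also that the paper's identity $h\bigl(\prod_k x_{\sigma(k),\tau(k)}\bigr)=(-q)^{l(\sigma)}h(x_\tau)$ does not directly yield your proportionality at $\tau=e$, because $\prod_k x_{\sigma(k),k}$ is a reordering of $x_{\sigma^{-1}}$, not a standard monomial; so your fallback of ``solving the order-one linear system'' still needs the commuting-antidiagonal observation (or an equivalent) to actually pin the system down. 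Supplying either that observation or the explicit antisymmetrizer computation would complete (d); everything else in your proposal is sound.
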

\noindent For simplicity, the generators of on $\mathcal{O}(SL_q(3))$ are denoted as:
\begin{equation*}
\begin{matrix}
a&b&c\\
d&e&f\\
g&h&k.
\end{matrix}
\end{equation*}
Then, standard monomials of order $m=c_1+c_2+c_3+c_4+c_5+c_6$ are in the form:
$$(aek)^{c_1}(afh)^{c_2}(bdk)^{c_3}(bfg)^{c_4}(cdh)^{c_5}(ceg)^{c_6}.$$
\begin{definition}
    Segments $aek$, $afh$, and $bdk$ are {\bf high-complexity segments}. 
\end{definition}
\begin{definition}
    Segments $bfg$, $cdh$, and $ceg$ are {\bf low-complexity segments}.
\end{definition}
\begin{remark}
    Low complexity segments commute with each other but high complexity segments do not commute with any other segments.
\end{remark}
\noindent The current paper will provide an algorithm for explicitly computing the Haar states of monomials on $\mathcal{O}(SL_q(3))$ with $0<|q|<1$. Explicit expressions in terms of parameter $q$ are provided for a special type of monomials, and explicit expressions for general monomials can be computed given enough computational resources.

\hfill

\noindent Using this $q$--deformed Haar measure, we hope to pursue $q$--deformed Weingarten functions in future work. Examples of $q$--deformed Weingarten functions are provided in Appendix~\ref{apd:e}. For all of these examples, it can be seen directly that when $q\rightarrow 1$, the usual Haar measure is recovered. 

\section{Haar state on $\mathcal{O}(SL_q(n))$}
{\noindent}By Noumi \textit{et al.}~\cite{noumi1993finite}, monomials on $\mathcal{O}(GL_q(n))$ form a basis. As a quotient group of $\mathcal{O}(GL_q(n))$, monomials on $\mathcal{O}(SL_q(n))$ form a basis as well. To define the Haar state on $\mathcal{O}(SL_q(n))$, it suffices to define the Haar state of each monomial.

\subsection{Characterization of monomial $\boldsymbol{x}$ such that $\boldsymbol{h(x)\ne 0}$} \hfill \\

\noindent
Not every monomial has a non-zero Haar state value. In this section, we will give a criterion to determine whether the Haar state of a monomial is zero or not.

\hfill

\noindent Let $D_n$ be the diagonal subgroup of $SL_q(n)$. Recall that the coordinate Hopf algebra $\mathcal{O}(D_n)$ is the commutative algebra $\mathbb{C}[t_1,t_1^{-1},\cdots,t_n,t_n^{-1}]$ of all Laurent polynomials in $n$ indeterminates $t_1,t_2,\dots,t_n$ with comultiplication $\Delta(t_i)=t_i\otimes t_i$ and counit $\varepsilon(t_i)=1$. The surjective homomorphism $\pi_{D_n}:\mathcal{O}(SL_q(n))\mapsto \mathcal{O}(D_n)$ is given by $\pi_{D_n}(x_{ij})=\delta_{ij}t_i$. Since we have $D_q^k=1_{\mathcal{O}(SL_q(n))}$ for all $k\in \mathbb{N}^+$, $\pi_{D_n}$ tells us $1_{\mathcal{O}(D_n)}=\pi_{D_n}(1_{\mathcal{O}(SL_q(n))})=\pi_{D_n}(D_q^k)=(\Pi_{i=1}^nt_i)^k$. 

\hfill

\noindent The right and left action of $\mathcal{O}(SL_q(n))$ on $\mathcal{O}(D_n)$, denoted as $L_{D_n}$ and $R_{D_n}$, is defined as:
\begin{equation*}
    \begin{split}
        L_{D_n}&=(\pi_{D_n}\otimes Id)\circ \Delta,\\
        R_{D_n}&=(Id\otimes \pi_{D_n})\circ \Delta.
    \end{split}
\end{equation*}
Given vector $v=(v_1,v_2,\dots,v_n)\in \mathbb{R}^n$, we write $t^v=\Pi_{i=1}^nt_i^{v_i}$. If $x$ is a monomial, we have:
\begin{equation*}
    \begin{split}
        L_{D_n}(x)&=t^{\alpha(\theta(x))}\otimes x,\\
        R_{D_n}(x)&=x\otimes t^{\beta(\theta(x))}.
    \end{split}
\end{equation*}

\hfill

\noindent The next theorem is a generalization of Klimyk and Schmudgen's observation~\cite{klimyk2012quantum}. It gives the necessary condition such that $h(x)\ne 0$ for $x\in \mathcal{O}(SL_q(n))$:

\hfill

\noindent\textbf{Theorem 1 a)}: {\it Let $x$ be a monomial. Then $h(x)\ne 0$ implies that there exist $k\in \mathbb{N}^+$ such that $\theta(x)$ is a ${k}$-doubly stochastic matrix.}

\hfill

\noindent{\bf Proof}:
Consider $(\pi_{D_n}\otimes h)\circ \Delta(x)$. There are two ways to compute this object:
\begin{equation*}
\begin{split}
    (\pi_{D_n}\otimes h)\circ \Delta(x)&=\pi_{D_n}\circ (Id\otimes h)\circ \Delta(x)=\pi_{D_n}(h(x)\cdot 1_{\mathcal{O}(SL_q(n))})\\
    &=h(x)\cdot 1_{\mathcal{O}(D_n)},\\
    (\pi_{D_n}\otimes h)\circ \Delta(x)&=(id \otimes h)\circ (\pi_{D_n}\otimes id)\circ \Delta(x)=(id \otimes h)\circ L_{D_n}(x)\\
    &=(id \otimes h)(t^{\alpha(\theta(x))}\otimes x)\\
    &=h(x)\cdot t^{\alpha(\theta(x))}.
\end{split}
\end{equation*}
Thus, $h(x)\cdot 1_{\mathcal{O}(D_n)}=h(x)\cdot t^{\alpha(\theta(x))}$. Since $h(x)\ne 0$, we get $1_{\mathcal{O}(D_n)}= t^{\alpha(\theta(x))}$. This means that we can find integer $k_1>0$ such that $t^{\alpha(\theta(x))}=(\Pi_{i=1}^nt_i)^{k_1}$. Thus, $\alpha(\theta(x))=(k_1)_{i=1}^n$.

\noindent Apply the same argument to $(h\otimes \pi_{D_n})\circ \Delta(x)$, we get $1_{\mathcal{O}(D_n)}= t^{\beta(\theta(x))}$. Thus, we can find $k_2>0$ such that $\beta(\theta(x))=(k_2)_{i=1}^n$. But we must have $k_1=k_2$ since
\begin{equation*}
    nk_1=\sum_{i=1}^n\sum_{j=1}^na_{ij}=nk_2.  
\end{equation*} 

\hfill $\blacksquare$








\subsection{The linear subspace spanned by monomials with non-zero Haar states}\label{lin_nonzero}

\hfill

\noindent Let $\mathcal{NZ}$ be the linear subspace spanned by monomials with non-zero Haar states. In this section, we give a criterion to pick a basis for $\mathcal{NZ}$. We write $A_n(m),\ m\in \mathbb{N}^+$ as the set of $n\times n$ $m$-doubly stochastic matrices and $B_n(m)$ as the set of monomials on $\mathcal{O}(SL_q(n))$ whose counting matrices belong to $A_n(m)$.

\hfill

\noindent First, we introduce a total order '$<$' on $A_n(m)$.
For every $C=(c_{ij})_{i,jk=1}^n\in A_n(m)$, we associate a vector 
\begin{equation*}
    \mathcal{V}(C)=(c_{11},c_{12},\dots,c_{1n},c_{21},c_{22},\dots,c_{nn}),
\end{equation*}
and we compare such vectors in lexicographic order. We say matrices $C<D$ if $\mathcal{V}(C)<\mathcal{V}(D)$. With this total order, we have the following observation:

\hfill

\noindent{\it If $x=P\cdot x_{ik}x_{jl}\cdot Q\in B_n(m)$ $(i<j,k<l)$ where $P,Q$ are two monomials and we switch the order of $x_{ik}x_{jl}$ so that:}
\begin{equation*}
    x=y+(q-q^{-1})z,
\end{equation*}
{\it where $y=P\cdot x_{jl}x_{ik}\cdot Q$ and $z=P\cdot x_{il}x_{jk}\cdot Q$, then we have: $\theta(z)\in A_n(m)$ and $\theta(z)<\theta(x)=\theta(y)$.} 

\hfill

\noindent Based on the observation, we get the following lemma:\\
{\bf Lemma 1}: For each $A\in A_n(m)$, we fix monomial $x_A\in B_n(m)$ such that $\theta(x_A)=A$. If $\phi\in B_n(m)$ is a monomial with counting matrix $M$, then we can decompose $\phi$ as:
\begin{equation}
    \phi=c_M\cdot x_M+\sum_{\substack{P<M\\P\in A_n(m)}}c_P\cdot x_P.
    \label{eq:1}
\end{equation}
{\bf Proof}: Since $\phi$ and $x_M$ have the same counting matrix, we can permute the generators in $\phi$ to the same order as in $x_M$. We denote this process as a chain:
\begin{equation*}
    \phi=\phi_0\rightarrow \phi_1\rightarrow\phi_2\rightarrow\cdots\rightarrow \phi_k=x_M,
\end{equation*}
where each $\phi_i$ is a reordering of $\phi$ and we get $\phi_{i+1}$ by switching the order of two adjacent generators in $\phi_i$. From $\phi_i$ to $\phi_{i+1}$, we may get a new term $\varphi_{i+1}$. As discussed before, $\theta(\varphi_{i+1})\in A_n(m)$ and $\theta(\varphi_{i+1})<\theta(\phi_{i+1})=M$. We can permute these newly generated $\varphi_i$'s to their corresponding $x_{\theta(\varphi_{i+1})}$'s, and we may get new terms in this process as well. However, each time we repeat this permuting process to a monomial $y$, the counting matrix of the newly generated monomial is always smaller than $\theta(y)$. Since the counting matrix of the newly generated monomial is always descending, we can finish this permuting process in finite steps. In other words, we will get a chain on which every transposition does not generate new monomials. Then, every monomial appearing in the summation will be in the desired form, and we get Equation~(\ref{eq:1}).
\null \hfill $\blacksquare$

\hfill

\noindent Lemma 1 provides a criterion for picking a basis for $\mathcal{NZ}$. Let $S_n(m)=\{x_M, M\in A_n(m)\}$. Then, we can write
\begin{equation*}
    \mathcal{NZ}=\left\langle \bigcup_{i=1}^{\infty}S_n(i) \right\rangle.
\end{equation*}

\hfill

\noindent\textbf{Theorem 1 b)}: \textit{Every monomial with non-zero Haar state value can be written as a linear combination of standard monomials.}

\hfill

 \noindent\textbf{Proof}: By the {\bf Birkhoff-Von Neumann Theorem}~\cite{birkhoff1946three}~\cite{von1953certain}, every $M\in A_n(m)$ can be decomposed into $M=m_1\sigma_1+m_2\sigma_2+\cdots +m_{n!}\sigma_{n!}$, where $\sigma_i$'s are matrix in $A_n(1)$ and $m_i$'s are non-negative integers whose sum is $m$. Notice that each matrix $\sigma_i$ can be identified with a permutation on $n$ letters. We denote the corresponding permutation as $\sigma_i$ as well. Then, the counting matrix of the monomial $\prod_{\sigma_i\in S_n}(x_{\sigma_i})^{m_i}$ is $M$. This implies that for every $M\in A_n(m)$, we can choose $x_M$ in form $\prod_{\sigma_i\in S_n}(x_{\sigma_i})^{m_i}$. Combining Lemma 1, the statement in Main Theorem b) is clear. 
\null \hfill $\blacksquare$

\hfill

\noindent Notice that the set of all standard monomials contains a basis of $\mathcal{NZ}$, but the set itself is not a basis of $\mathcal{NZ}$. The reason is that the different standard monomials could have the same counting matrix. In this case, standard monomials with the same counting matrix are linearly dependent(see Appendix~\ref{apd:c}, Equation (\ref{apeq:7}) and (\ref{apeq:8})). To find a basis of $\mathcal{NZ}$, for each $M\in A(m)$, we have to preserve only one standard monomial corresponding to $M$ and filter out 'unnecessary' standard monomials.

\subsection{Comultiplication of standard monomials}

\hfill \\

\noindent Once we solve the Haar state of standard monomials of each order, we can find the Haar state for other monomials according to their linear combination. We will use the defining relation $((id\otimes h)\circ \Delta)(x)=h(x)\cdot 1=((h\otimes id)\circ \Delta)(x)$ to solve the Haar state of every standard basis. We start with the investigation of the comultiplication of a monomial.

\hfill

\noindent {\bf Lemma 2}: Let $x$ be a monomial and we write:
\begin{equation*}
    \Delta(x)=\sum_{i\in I} z_i\otimes y_i,
\end{equation*}
with $I$ an index set and $y_i,z_i$ monomials. Then we have the following equations:
\begin{equation*}
    \alpha(\theta(x))=\alpha(\theta(z_i))\ \ \ \beta(\theta(x))=\beta(\theta(y_i))\ \ \ \beta(\theta(z_i))=\alpha(\theta(y_i)).
\end{equation*}

\hfill

\noindent {\bf Proof}: Recall that $\Delta(x_{ij})=\sum_{k=1}^nx_{ik}\otimes x_{kj}$ and $\Delta$ is a morphism of algebra. If $x=\Pi_{l=1}^p x_{i_lj_l}$, then
\begin{equation*}
    \Delta(x)=\Delta(\Pi_{l=1}^p x_{i_l,j_l})=\Pi_{l=1}^p\Delta(x_{i_l,j_l})=\prod_{l=1}^p\left(\sum_{k=1}^nx_{i_l,k}\otimes x_{k,j_l}\right).
\end{equation*}
For each $z_i$ the $l$-th generator is in the same row as the $l$-th generator in $x$, and for each $y_i$ the $l$-th generator is in the same column as the $l$-th generator in $x$. The column index of the $l$-th generator in $z_i$ is the same as the row index of the $l$-th generator in $y_i$. Thus, the row sum of $x$ equals the row sum of $z_i$; the column sum of $x$ equals the column sum of $y_i$, and the column sum of $z_i$ equals the row sum of $y_i$. 
\null \hfill $\blacksquare$

\hfill

\noindent With Lemma 2, we have the following result:

\noindent{\bf Lemma 3}: If $\theta(x)\in A_n(m)$ then $h(y_i)\ne 0$ (or $h(z_i)\ne 0$) if and only if $\theta(y_i)\in A_n(m)$ ( or $\theta(z_i)\in A_n(m)$). Moreover, $\theta(y_i)\in A_n(m)$ if and only if $\theta(z_i)\in A_n(m)$.

\hfill

\noindent {\bf Proof}: Use Theorem 1 a) and Lemma 2.
\null \hfill $\blacksquare$

\hfill

\noindent\textbf{Theorem 1 c)}: \textit{Let $\{s_l\}_{l\in \mathcal{I}_m}$,  be the set of standard monomials of order $m$. Then, we can write $(Id\otimes h)\circ\Delta(s_l)$ and $(h\otimes Id)\circ\Delta(s_l)$ as linear combinations of $s_j$'s and the coefficient of each $s_j$ is a linear combination of $h(s_i)$'s.}

\hfill

\noindent\textbf{Proof}: If $s_l\in B_n(m)$ is a standard basis, Lemma 3 implies that
\begin{equation}
    (id\otimes h)\circ\Delta(s_l)=\sum_{\substack{y\in B_n(m)\\ z\in B_n(m)}} h(y)\cdot z=\sum_i h(y_i)\cdot z_i
    \label{ep:4}
\end{equation}
Then, by Lemma 1, we can decompose each $y_i$ and $z_i$ as:
\begin{equation}
    y_i=\sum_{j=1}^{k} d_j^{y_i}\cdot s_j,
    \label{ep:1}
\end{equation}
\begin{equation}
    z_i=\sum_{j=1}^{k} d_j^{z_i}\cdot s_j.
    \label{ep:2}
\end{equation}
Substitute Equation~(\ref{ep:1}) and Equation~(\ref{ep:2}) into Equation~(\ref{ep:4}), we get:
\begin{equation}
    (id\otimes h)\circ\Delta(s_l)=\sum_{j=1}^k\left(\sum_{i=1}^kc_{ij}h(s_i)\right)\cdot s_j.
    \label{eq:2}
\end{equation}
\null \hfill $\blacksquare$
\begin{remark}
Here, $\{s_i\}_{i=1}^k\subset\{s_l\}_{l\in \mathcal{I}_m}$ is a basis of standard monomials of order $m$.
\end{remark}
\noindent In Equation~(\ref{ep:4}), we call $y_i$ the {\bf relation component} and call $z_i$ the {\bf comparing component}. We will say $\boldsymbol{z_i}$ {\bf (or $\boldsymbol{y_i}$) contains} $\boldsymbol{s_j}$ if $d_j^{z_i}\ne 0$ (or $d_j^{y_i}\ne 0$).

\hfill

 \noindent Since we can identify $1$ with $D_q^m$, we get $(id\otimes h)\circ\Delta(s_l)=h(s_l)\cdot D_q^m$. Notice that we can decompose $D_q^m$ as a linear combination of standard monomials of order $m$. Thus, by comparing the coefficient of the same standard monomial on both sides of $(id\otimes h)\circ\Delta(s_l)=h(s_l)\cdot D_q^m$, we can find a linear relation consisting of the Haar states of standard monomials of order $m$ (See section 2.5 for more detail). We call such a linear relation {\bf linear relation of order} $\boldsymbol{m}$. We call a linear system consisting of linear relations of order $m$ a {\bf system of order} $\boldsymbol{m}$.

\subsection{System of order 1}

\hfill \\

\noindent In this section, we will prove Theorem 1 d). The standard basis for $B_n(1)$ is in the form of $x_{\tau_i}=\Pi_{k=1}^nx_{k,\tau_i(k)}$ where $\tau_i$ is a permutation on $n$ letters. We have:
\begin{equation*}
    \Delta(x_{\tau_i})=\Delta(\Pi_{k=1}^nx_{k,\tau_i(k)})=\displaystyle{\prod_{k=1}^n\left(\sum_{p=1}^nx_{k,p}\otimes x_{p,\tau_i(k)}\right)}.
\end{equation*}
By Lemma 3, after apply $(id\otimes h)$ to $\Delta(x_{\tau_i})$, we get:
\begin{equation}
    \begin{split}
        (id\otimes h)\circ\Delta(x_{\tau_i})=\sum_{\sigma_j\in S_n}h( \Pi_{k=1}^nx_{\sigma_j(k),\tau_i(k)})\cdot\Pi_{k=1}^nx_{k,\sigma_j(k)}. 
    \end{split}
\end{equation}
On the other hand, recall that
\begin{equation}
    1=D_q=\sum_{\sigma_j\in S_n} (-q)^{l(\sigma_j)}\prod_{k=1}^nx_{k,\sigma_j(k)},
    \label{eq:dp}
\end{equation}
where $l(\sigma_j)$ is the inverse number of $\sigma_j$.

\hfill

\noindent Thus, using $(id\otimes h)\circ\Delta(x_{\tau_i})=h(x_{\tau_i})\cdot 1$ and comparing the coefficients of each standard basis, we get for every $\sigma_j \in S_n$:
\begin{equation}
    h( \Pi_{k=1}^nx_{\sigma_j(k),\tau_i(k)})=(-q)^{l(\sigma_j)}h(x_{\tau_i}).
    \label{eq:4}
\end{equation}
In general, $\Pi_{k=1}^nx_{\sigma_j(k),\tau_i(k)}$ is not a standard monomial. However, if we choose $\sigma_j$ such that $\sigma_j(k)=n+1-\tau_i(k)$, then every generator in $\Pi_{k=1}^nx_{\sigma_j(k),\tau_i(k)}$ commutes with each other and $\Pi_{k=1}^nx_{\sigma_j(k),\tau_i(k)}=\Pi_{k=1}^nx_{k,n+1-k}$. Moreover, $l(\sigma_j)=\frac{n(n-1)}{2}-l(\tau_i)$. Thus, from Equation~(\ref{eq:4}) we get:
\begin{equation}
    h(\Pi_{k=1}^nx_{k,n+1-k})=(-q)^{\frac{n(n-1)}{2}-l(\tau_i)}h(\Pi_{k=1}^nx_{k,\tau_i(k)}).
    \label{eq:5}
\end{equation}
Therefore, using Equation~(\ref{eq:5}) and Equation~(\ref{eq:dp}) we get:
\begin{equation}
\begin{split}
    1&=h(1)=\sum_{\sigma_j\in S_n} (-q)^{l(\sigma_j)}h(\Pi_{k=1}^nx_{k,\sigma_j(k)})\\
    &=\left(\sum_{\sigma_j\in S_n} (-q)^{2l(\sigma_j)-\frac{n(n-1)}{2}}\right)h(\Pi_{k=1}^nx_{k,n+1-k}),
\end{split}
\end{equation}
which gives
\begin{equation}
    h(\Pi_{k=1}^nx_{k,n+1-k})=\frac{(-q)^{\frac{n(n-1)}{2}}}{\sum_{\sigma_j\in S_n} (-q)^{2l(\sigma_j)}}.
\end{equation}
Then by Equation~(\ref{eq:4}), notice that the inverse number for the $\tau_i$ corresponding to $\Pi_{k=1}^nx_{k,n+1-k}$ is just $\frac{n(n-1)}{2}$, we get for every $\tau_i\in S_n$:
\begin{equation}
    h(\Pi_{k=1}^nx_{k,\tau_i(k)})=\frac{(-q)^{l(\tau_i)}}{\sum_{\sigma_j\in S_n} (-q)^{2l(\sigma_j)}}.
    \label{eq:sys1}
\end{equation}
Now, let $I_n(k)$ be the number of permutations on $n$ letters with $k$ inversions. Then, the denominator of Equation (\ref{eq:sys1}) can be rewritten as:
\begin{equation*}
    \sum_{\sigma_j\in S_n} (-q)^{2l(\sigma_j)}=\sum_{k=0}^{\frac{n(n+1)}{2}}I_n(k)q^{2k}.
\end{equation*}
By Andrews~\cite{andrews1998theory}, the generating function of $I_n(k)$ is
\begin{equation*}
    \sum_{k=0}^{\frac{n(n+1)}{2}}I_n(k)x^{k}=\prod_{j=1}^n\frac{1-x^j}{1-x}. 
\end{equation*}
So the denominator of Equation (\ref{eq:sys1}) can be rewritten as
\begin{equation*}
    \sum_{\sigma_j\in S_n} (-q)^{2l(\sigma_j)}=\prod_{j=1}^n\frac{1-q^{2j}}{1-q^2}=[n]_{q^2}!,
\end{equation*}
and we get:
\begin{equation*}
    h(\Pi_{k=1}^nx_{k,\tau_i(k)})=\frac{(-q)^{l(\tau_i)}}{[n]_{q^2}!}.
\end{equation*}

\subsection{Liner relations of higher order}

\hfill

\hfill

\noindent In this section, let $\{s_l\}_{l=1}^{K_m}$ be a set of linearly independent standard monomials of order $m$. Recall Equation~(\ref{eq:2}):
\begin{equation*}
\begin{split}
    (id\otimes h)\circ\Delta(s_l)=\sum_{j=1}^{K_m}\left(\sum_{i=1}^{K_m}c_{ij}h(s_i)\right)\cdot s_j.
\end{split}
\end{equation*}
We can do the same thing to $h(s_l)\cdot 1=h(s_l)\cdot D_q^m$ and get:
\begin{equation}
    h(s_l)\cdot 1=h(s_l)\cdot D_q^m=\sum_{j=1}^{K_m} b_jh(s_l)\cdot s_j.
    \label{ep:3}
\end{equation}

\hfill

\noindent By comparing the coefficients of standard bases in $(id \otimes h)\circ\Delta(s_l)$ and in $h(s_l)\cdot 1$, we get:
\begin{equation} 
    \sum_{i=1}^{K_m}c_{ij}h(s_i)=b_jh(s_l)
    \label{eq:3}
\end{equation}
for every $1\le j\le K_m$. We will call Equation~(\ref{eq:3}) the \textbf{linear relation derived from equation basis $\boldsymbol{s_l}$ and comparing basis $\boldsymbol{s_j}$}. Each index $1\le l\le K_m$ corresponds to $K_m$ linear relations, so there are $K_m^2$ linear relations. Since there are $K_m$ unknowns, it is possible to construct more than one system of order $m$. Notice that these linear relations all have the zero right-hand side. One way to get a linear relation with the non-zero right-hand side is by decomposing $1=h(1)=h(D_q^m)$ into a sum of standard monomials. Although we can construct more than one system of order $m$, not every system is invertible. We will give a more robust approach to compute the Haar state of $\mathcal{O}(SL_q(3))$ later. \\
\\
In the order 1 case, finding Equation (\ref{eq:2}) and (\ref{ep:3}) is an easy task. However, the situation is much more complicate in higher order case. To understand the difficulty to find the two equations in higher order case, we introduce the \textbf{order restriction} for each summand appearing in the comultiplication of a monomial:\\
\\
\textit{Let $x=\prod_{k\in I}x_{i_k,j_k}$ be a monomial and the comultiplication of $x$ be
\begin{equation*}
    \Delta(x)=\prod_{k\in I}\Delta(x_{i_k,j_k})=\prod_{k\in I}\left(\sum_{l_k=1}^nx_{i_k,l_k}\otimes x_{l_k,j_k}\right)=\sum_{i\in I'} z_i\otimes y_i.
\end{equation*}
Then:
\begin{itemize}
    \item[i)] the $k$-th generator of the left component $z_i$ is in the $i_k$-th row
    \item[ii)] the $k$-th generator of the right component $y_i$ is in the $j_k$-th column
    \item[iii)] The column index of the $l$-th generator in $z_i$ equals to the row index of the $l$-th generator in $y_i$.
\end{itemize}}
\noindent The order restriction is a direct consequence of the fact that the comultiplication is an algebra homomorphism. Since each index $l_k$ ranges from $1$ to $n$, every possible combination of $z_i\otimes y_i$ that satisfies the order restriction will appear in the summation of $\Delta(x)$. In higher order case, this means that Equation (\ref{ep:4}) includes not only summand whose left and right components are standard monomials but also summand whose left and right components are reordering of standard monomials satisfying the order restriction. As an example in $\mathcal{O}(SL_q(3))$, if $(x_{1,1}x_{2,3}x_{3,2})(x_{1,2}x_{2,1}x_{3,3})$ is the left component of one of the tensor products in $\Delta\left((x_{1,1}x_{2,2,}x_{3,3})^2\right)$ then all reordering of the left component satisfying property i) of the order restriction are:
\begin{multicols}{2}
\begin{itemize}
    \item[1)] $(x_{1,1}x_{2,3}x_{3,2})(x_{1,2}x_{2,1}x_{3,3})$
    \item[2)] $(x_{1,2}x_{2,3}x_{3,2})(x_{1,1}x_{2,1}x_{3,3})$
    \item[3)] $(x_{1,1}x_{2,1}x_{3,2})(x_{1,2}x_{2,3}x_{3,3})$
    \item[4)] $(x_{1,1}x_{2,3}x_{3,3})(x_{1,2}x_{2,1}x_{3,2})$
    \item[5)] $(x_{1,2}x_{2,1}x_{3,2})(x_{1,1}x_{2,3}x_{3,3})$
    \item[6)] $(x_{1,2}x_{2,3}x_{3,3})(x_{1,1}x_{2,1}x_{3,2})$
    \item[7)] $(x_{1,1}x_{2,1}x_{3,3})(x_{1,2}x_{2,3}x_{3,2})$
    \item[8)] $(x_{1,2}x_{2,1}x_{3,3})(x_{1,1}x_{2,3}x_{3,2})$.
\end{itemize}
\end{multicols}
\noindent Thus, the comultipilcation of a standard monomial of higher order contains not only standard monomials but also variations of standard monomials satisfying the order restriction. This is the major difference between the case of order 1 and higher order cases. To find a linear relation derived from equation basis $s_l$ and comparing basis $s_j$ in higher order case, we have to:
\begin{itemize}
    \item[i)]find all left (or right) component appearing in $\Delta(s_l)$ that contains $s_j$ and compute the corresponding coefficient $d_j^{z}$ in Equation~(\ref{ep:2});
    \item[ii)] find the decomposition of the right (or left) component in $\Delta(s_l)$ corresponding to the left (or right) component in i) and sum all such decomposition together to get a linear relation;
    \item[iii)] decompose every summand containing $s_j$ in Equation~(\ref{ep:3}) to find $b_j$.
\end{itemize}
All 3 steps involve decomposing non-standard monomials into a linear combination of standard monomials and such decomposition is not easy in general. However, there is a simple criterion to determine whether a standard monomial appears in the decomposition of a non-standard monomial or not.

\hfill

\noindent\textbf{Theorem 1 e)}:\textit{When changing the order of generators in a monomial, the newly generated monomials cannot contain more generator $x_{1,1}$ and $x_{n,n}$ and cannot contain less generator $x_{1,n}$ and $x_{n,1}$ comparing to the monomial being reordered.}

\begin{proof}
    When a new monomial is generated, we replace a pair of $x_{i,k}x_{j,l}$ ($i<j,k<l$) by a pair of $x_{i,l}x_{j,k}$ to get the new monomial. Notice that none of $x_{i,k}$ and $x_{j,l}$ can be $x_{1,n}$ or $x_{n,1}$ and none of $x_{i,l}$ and $x_{j,k}$ can be $x_{1,1}$ or $x_{n,n}$. Thus, $x_{1,n}$ and $x_{n,1}$ can never be replaced by other generators and $x_{1,1}$ and $x_{n,n}$ can never be used as the generator to replace other generators. This finishes the proof.
\end{proof}
\begin{remark}
    The decomposition of a monomial $x$ does not contain those standard monomials whose number of generator $x_{11}$ and $x_{nn}$ (or $x_{1n}$ and $x_{n1}$) exceeds (or less than) that of monomial $x$.
\end{remark}
\noindent Notice that every standard monomial in $\mathcal{O}(SL_q(3))$ contains at least one of $x_{11}$, $x_{13}$, $x_{31}$, and $x_{33}$. Thus, Theorem 1 e) will play an important role in the computation of the Haar state on $\mathcal{O}(SL_q(3))$ later.

\section{A monomial basis for $\mathcal{NZ}$ in $\mathcal{O}(SL_q(3))$ }
\label{chp:8}
\noindent
As mentioned in section \ref{lin_nonzero}, the set of standard monomials is not a basis for linear subspace $\mathcal{NZ}$. In this section, we will provide a criterion to pick a monomial basis for $\mathcal{NZ}$ from the set of standard monomials in $\mathcal{O}(SL_q(3))$ and define a monomials basis for $\mathcal{NZ}$ based on the criterion.
\begin{proposition}\label{prop 6}
    Let $M=(m_{ij})_{i,j=1}^3$ be a $3\times 3$ $k$-doubly stochastic matrix. If there exist $1\le i',j'\le 3$ such that $m_{i',j'}=0$, then $M$ is uniquely decomposed into a linear combination of matrices in $A_3(1)$.
\end{proposition}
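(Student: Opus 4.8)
The plan is to translate the claim into the language of permutation matrices and exploit the Birkhoff--Von Neumann decomposition already invoked earlier. Recall that a $3\times 3$ $k$-doubly stochastic matrix $M$ decomposes as $M=\sum_{\sigma\in S_3}m_\sigma P_\sigma$ with $m_\sigma\in\mathbb{N}_0$, $\sum m_\sigma=k$, and $P_\sigma$ the permutation matrix of $\sigma$. Uniqueness of such a decomposition fails in general precisely because $S_3$ has the identity $P_{e}+P_{(123)}+P_{(132)}=P_{(12)}+P_{(13)}+P_{(23)}=J$ (the all-ones matrix), which is the only linear relation among the six permutation matrices of order $3$. So the whole proposition reduces to: if $M$ has a zero entry, then no two distinct nonnegative integer combinations of the $P_\sigma$ can equal $M$, i.e. one cannot add a positive multiple of the ``even triple'' and subtract the same from the ``odd triple'' (or vice versa) while staying nonnegative.

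First I would set up the linear algebra cleanly: the six $3\times3$ permutation matrices span a $5$-dimensional subspace of the $9$-dimensional matrix space (the kernel of the map $(m_\sigma)\mapsto\sum m_\sigma P_\sigma$ is spanned by the single vector $v_0$ that is $+1$ on the three even permutations and $-1$ on the three odd ones, since $\sum_{\text{even}}P_\sigma=\sum_{\text{odd}}P_\sigma=J$). Hence any two decompositions of the same matrix $M$ differ by an integer multiple $t\cdot v_0$ of this kernel vector. So $M$ has a non-unique decomposition if and only if there exists a nonnegative integer combination $(m_\sigma)$ with $\sum m_\sigma P_\sigma = M$ and some $t\neq 0$ with $m_\sigma + t\,(v_0)_\sigma\ge 0$ for all $\sigma$ — equivalently, $\min_{\text{even }\sigma}m_\sigma\ge 1$ and $\min_{\text{odd }\sigma}m_\sigma\ge 1$ (taking $t=\pm1$ suffices to detect it). Adding $v_0$ to a decomposition adds $1$ to every entry lying in the support of the three even permutation matrices and subtracts $1$ from every entry in the support of the three odd ones; but the union of the supports of the three even permutation matrices of $S_3$ is \emph{all nine positions}, and likewise for the three odd ones (each of $J$'s positions is covered).

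The key step is then the following dichotomy. Suppose $M$ had two distinct decompositions; then the ``lower'' one, say $\sum m_\sigma P_\sigma$, satisfies $m_\sigma\ge 1$ for all three odd $\sigma$ (if we subtracted $v_0$) or for all three even $\sigma$ (if we added $v_0$) — in either case, some homogeneous triple $\{P_{\sigma_1},P_{\sigma_2},P_{\sigma_3}\}$ summing to $J$ appears with all coefficients $\ge 1$ in $M$. That forces $M\ge J$ entrywise, so every entry $m_{ij}\ge 1$, contradicting the hypothesis that some $m_{i'j'}=0$. Conversely, the argument shows exactly \emph{when} uniqueness can fail, which also explains the remark after the proposition. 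I would write this up as: assume two decompositions, reduce to the kernel being one-dimensional and spanned by $v_0$, observe that crossing from one lattice point to another along $\pm v_0$ requires all coefficients of an even (resp. odd) homogeneous triple to be positive, and conclude $M\ge J$, contradicting the zero entry.

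The main obstacle — really the only thing requiring care — is the combinatorial fact that in $S_3$ the \emph{only} linear dependency among the six permutation matrices is $J = \sum_{\text{even}}P_\sigma = \sum_{\text{odd}}P_\sigma$, i.e. that the kernel of the decomposition map is exactly one-dimensional and spanned by $v_0$. I would verify this by a direct rank computation (the six $3\times3$ permutation matrices have rank $5$ as vectors in $\mathbb{R}^9$; equivalently the $6\times 9$ incidence matrix has a one-dimensional left null space), noting that this is special to $n=3$: for $n\ge 4$ the kernel has dimension $(n-1)^2 - \big(2\binom{n}{2}... \big)$ bigger than $1$ — but we only need $n=3$ here. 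Everything else (nonnegativity bookkeeping, the ``$M\ge J$'' deduction, the contradiction with a zero entry) is elementary once that dependency is pinned down.
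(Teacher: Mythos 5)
Your proof is correct, but it takes a genuinely different route from the paper's. The paper argues directly and constructively: assuming without loss of generality $m_{11}=0$, the two matrices of $A_3(1)$ with a $1$ in position $(1,1)$ (its $S_1,S_2$) are excluded, and each of the remaining four is the unique survivor covering some position of $M$ — for instance only $S_3$ has a nonzero $(2,2)$-entry — so its coefficient is forced to equal the corresponding entry, giving the explicit decomposition $M=m_{22}S_3+m_{33}S_4+m_{23}S_5+m_{32}S_6$ that the paper immediately reuses when defining its monomial basis. You instead analyze the kernel of the map $(m_\sigma)\mapsto\sum_\sigma m_\sigma P_\sigma$: the six $3\times 3$ permutation matrices span a $5$-dimensional space, so the kernel is one-dimensional and spanned by the even-minus-odd vector $v_0$; two distinct nonnegative integer decompositions must differ by a nonzero integer multiple of $v_0$, and nonnegativity then forces one of them to contain a full parity class with all coefficients at least $1$, whence $M$ dominates the all-ones matrix (the paper's $F$) entrywise, contradicting the zero entry. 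Your approach is less constructive (it proves uniqueness without exhibiting the decomposition, which the paper needs later), but it is more conceptual: it isolates the single linear relation responsible for non-uniqueness and shows uniqueness fails exactly when every entry of $M$ is positive, which is the dichotomy the paper exploits right afterwards in writing $M=a\cdot F+N$. Two minor blemishes, both in asides and neither affecting the argument: your ``equivalently'' clause should say that \emph{some} decomposition has all three odd coefficients at least $1$ \emph{or} all three even coefficients at least $1$ (not ``and'' — for $M=F$ itself each of the two decompositions has one parity class positive and the other zero), and the parenthetical kernel-dimension formula for $n\ge 4$ is garbled, though as you note only $n=3$ is required.
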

\begin{proof}
    Index the six matrices in $A_3(1)$ as:
    \begin{multicols}{3}
        \begin{enumerate}
            \item[] \begin{equation*}
                S_1=\begin{bmatrix}
            1&0&0\\
            0&1&0\\
            0&0&1
        \end{bmatrix}
            \end{equation*}
        \item[] \begin{equation*}
            S_2=\begin{bmatrix}
            1&0&0\\
            0&0&1\\
            0&1&0
        \end{bmatrix}
        \end{equation*}
        \item[] \begin{equation*}
            S_3=\begin{bmatrix}
            0&0&1\\
            0&1&0\\
            1&0&0
        \end{bmatrix}
        \end{equation*}
        \item[] \begin{equation*}
        S_4=\begin{bmatrix}
            0&1&0\\
            1&0&0\\
            0&0&1
        \end{bmatrix}
    \end{equation*}
    \item[] \begin{equation*}
        S_5=\begin{bmatrix}
            0&1&0\\
            0&0&1\\
            1&0&0
        \end{bmatrix}
    \end{equation*}
    \item[] \begin{equation*}
        S_6=\begin{bmatrix}
            0&0&1\\
            1&0&0\\
            0&1&0
        \end{bmatrix}
    \end{equation*}
    \end{enumerate}
    \end{multicols}
    \noindent Without loss of generality, assume that $m_{11}=0$. Then, matrix $S_1$ and $S_2$ cannot appear in the decomposition of $M$. The only matrix in $A_3(1)$ whose $(2,2)$-entry is not zero is $S_3$. Hence, the coefficient of $S_3$ in the decomposition of $M$ is $m_{22}$. Similarly, the only matrix in $A_3(1)$ whose $(3,3)$-entry is not zero is $S_4$, so the coefficient of $S_4$ in the decomposition of $M$ is $m_{33}$; the only matrix in $A_3(1)$ whose $(2,3)$-entry is not zero is $S_5$, so the coefficient of $S_5$ in the decomposition of $M$ is $m_{23}$; the only matrix in $A_3(1)$ whose $(3,2)$-entry is not zero is $S_6$, so the coefficient of $S_6$ in the decomposition of $M$ is $m_{32}$. Hence, $M$ is decomposed into:
    $$M=m_{22}\cdot S_3+m_{33}\cdot S_4+m_{23}\cdot S_5+m_{32}\cdot S_6.$$
    The arguments for other cases are identical to $m_{11}=0$
\end{proof}
\noindent Denote 
\begin{equation*}
    F=\begin{bmatrix}
        1&1&1\\
        1&1&1\\
        1&1&1
    \end{bmatrix}
\end{equation*}
Then, every $3\times 3$ $k$-doubly stochastic matrix $M$ can be written as:
$$M=a\cdot F+N$$
where $a=\text{min}\{m_{i,j},1\le i,j\le 3\}$ and $N$ is a $(k-3a)$-doubly stochastic matrix with at least one entry equals to $0$. By Proposition \ref{prop 6}, the decomposition of $N$ is unique. To define a monomial basis consisting of standard monomials, we need to specify the decomposition of matrix $F$. There are two possible choices for $F$: $aekbfgcdh$ and $afhbdkceg$. They satisfy Equation (\ref{apeq:7}) in Appendix \ref{apd:c}. In this paper, we choose the standard monomial corresponding to matrix $F$ as:
$$aekbfgcdh.$$
Thus, if the unique standard monomial corresponding to $N$ is:
$$(aek)^{n_1}(afh)^{n_2}(bdk)^{n_3}(bfg)^{n_4}(cdh)^{n_5}(ceg)^{n_6},$$
the standard monomial corresponding to $M$ is:
$$(aek)^{n_1+a}(afh)^{n_2}(bdk)^{n_3}(bfg)^{n_4+a}(cdh)^{n_5+a}(ceg)^{n_6}.$$
Notice that in the monomial corresponding to $N$, at least one of $n_1$, $n_4$, and $n_5$ has to be zero since $N$ has a zero entry. For the same reason, at least one of $n_2$, $n_3$, and $n_6$ has to be zero. This implies that for every $k$-doubly stochastic matrix $M$, the corresponding standard monomial contains at most two of the three segments $afh$, $bdk$, $ceg$. Hence, we define the \textbf{monomial basis consisting of standard monomials} as:
$$\{(aek)^{m_1}(afh)^{m_2}(bdk)^{m_3}(bfg)^{m_4}(cdh)^{m_5}(ceg)^{m_6}, m_i\in\mathbf{N}_0 \text{ and } m_2\cdot m_3\cdot m_6=0\}$$
\section[Explicit formulas for special\\ standard monomials on $\mathcal{O}(SL_q(3))$]{Explicit formulas for special standard monomials on $\mathcal{O}(SL_q(3))$}
In this section, we will construct a linear systems of order $m$ called the \textbf{Source Matrix of order} $\boldsymbol{m}$ based on the relation $((id\otimes h)\circ \Delta)(x)=h(x)\cdot 1$ and the explicit solution to the Source Matrix is given. Then, we derive the explicit recursive relation between the Haar state of standard monomials in the form of $(cdh)^l(ceg)^{m-l}$. We start with the motivation of the construction of the Source matrix.\\
\subsection{The Source Matrix of order $m$}

\hfill

\hfill

\noindent Recall the 3 difficulties we introduced in section 2.5. To reduce the computation in step i), we prefer to pick a comparing basis $s_j$ such that the number of $z_i$'s in Equation (\ref{ep:4}) whose decomposition contain $s_j$ is as small as possible. To reduce the computation in step ii), we prefer to pick a equation basis $s_l$ such that in Equation (\ref{ep:4}) the decomposition of the $y_j$'s corresponding to the $z_j$'s in step i) is as simple as possible. To reduce the computation in step iii), we prefer to pick a comparing basis $s_j$ such that the number of terms in the expansion of $D_q^m$ whose decomposition contain $s_j$ is as small as possible.\\
\\
According to Theorem 1 e), the decomposition of a monomial does not contain those standard monomials whose number of generator $a$ and $k$ exceed that of the original monomial. Thus, we should pick those standard monomials containing as many generator $a$ and $k$ as possible to be the comparing basis $s_j$ so that only limited number of $z_j$'s in Equation (\ref{ep:4}) contains $s_j$. Theorem 1 e) also tells us that the decomposition of a monomial contains only those standard monomials whose number of generator $c$ and $g$ equals to or exceeds that of the original monomial. Thus, we should pick those equation basis $s_l$ such that in Equation (\ref{ep:4}) the $y_j$'s corresponding to those $z_j$'s which contains $s_j$ contain as many generator $c$ and $g$ as possible so that the decomposition of $y_j$'s contain only a limited number of standard monomials.\\
\\
Based on the analysis, we pick standard monomial $(ceg)^m$ as the equation basis and consider the linear relation derived from comparing basis $(aek)^{m-1}afh$, $(aek)^{m-1}bdk$, $(aek)^{m-2}afhbdk$, $(aek)^{m-1}bfg$, $(aek)^{m-1}cdh$, and $(aek)^{m-1}ceg$, respectively. Notice that these comparing basis contains at least $m-1$ generator $a$ and at least $m-1$ generator $k$. We exclude the comparing basis $(aek)^m$ since the corresponding linear relation is an identity. According to the order restriction, we list all the terms $z_j\otimes y_j$ in $\Delta\left((ceg)^m\right)$ whose $z_j$ contains one of our chosen comparing basis. Notice that these $z_j$'s are variations of our chosen comparing basis under the order restriction.\\
\\
Variations of $(aek)^{m-1}afh$:
\begin{enumerate}
    \item[1)] $(aek)^lafh(aek)^{m-1-l}\otimes\\(ceg)^lchd(ceg)^{m-1-l}$
    \item[2)] $(aek)^lafk(aek)^kaeh(aek)^{m-2-l-k}\otimes\\(ceg)^lchg(ceg)^{k}ced(ceg)^{m-2-l-k}$
    \item[3)] $(aek)^laeh(aek)^kafk(aek)^{m-2-l-k}\otimes\\(ceg)^lced(ceg)^{k}chg(ceg)^{m-2-l-k}$
\end{enumerate}
Variations of $(aek)^{m-1}bdk$:
\begin{enumerate}
    \item[1)] $(aek)^lbdk(aek)^{m-1-l}\otimes\\
    (ceg)^lfbg(ceg)^{m-1-l}$
    \item[2)] $(aek)^lbek(aek)^kadk(aek)^{m-2-l-k}\otimes\\(ceg)^lfeg(ceg)^kcbg(ceg)^{m-2-l-k}$
    \item[3)] $(aek)^ladk(aek)^kbek(aek)^{m-2-l-k}\otimes\\(ceg)^lcbg(ceg)^kfeg(ceg)^{m-2-l-k}$ 
\end{enumerate}
Variations of $(aek)^{m-2}afhbdk$:
\begin{enumerate}
    \item[1)] $(aek)^{i_1}b(eka)^{i_2}dk(aek)^{i_3}af(kae)^{i_4}h(aek)^{m-2-i_1-i_2-i_3-i_4}\otimes\\(ceg)^{i_1}f(egc)^{i_2}bg(ceg)^{i_3}ch(gce)^{i_4}d(ceg)^{m-2-i_1-i_2-i_3-i_4}$
    \item[2)] $(aek)^{i_1}af(kae)^{i_2}h(aek)^{i_3}b(eka)^{i_4}dk(aek)^{m-2-i_1-i_2-i_3-i_4}\otimes\\(ceg)^{i_1}ch(gce)^{i_2}d(ceg)^{i_3}f(egc)^{i_4}bg(ceg)^{m-2-i_1-i_2-i_3-i_4}$
    \item[3)] $(aek)^{i_1}be(kae)^{i_2}h(aek)^{i_3}afk(aek)^{i_4}adk(aek)^{m-3-i_1-i_2-i_3-i_4}\otimes\\(ceg)^{i_1}fe(gce)^{i_2}d(ceg)^{i_3}chg(ceg)^{i_4}cbg(ceg)^{m-3-i_1-i_2-i_3-i_4}$
    \item[4)] $(aek)^{i_1}afk(aek)^{i_2}adk(aek)^{i_3}be(kae)^{i_4}h(aek)^{m-3-i_1-i_2-i_3-i_4}\otimes\\(ceg)^{i_1}chg(ceg)^{i_2}cbg(ceg)^{i_3}fe(gce)^{i_4}d(ceg)^{m-3-i_1-i_2-i_3-i_4}$ 
    \item[5)] $(aek)^{i_1}adk(aek)^{i_2}be(kae)^{i_3}h(aek)^{i_4}afk(aek)^{m-3-i_1-i_2-i_3-i_4}\otimes\\(ceg)^{i_1}cbg(ceg)^{i_2}fe(gce)^{i_3}d(ceg)^{i_4}chg(ceg)^{m-3-i_1-i_2-i_3-i_4}$  
    \item[6)] $(aek)^{i_1}adk(aek)^{i_2}af(kae)^{i_3}h(aek)^{i_4}bek(aek)^{m-3-i_1-i_2-i_3-i_4}\otimes\\(ceg)^{i_1}cbg(ceg)^{i_2}ch(gce)^{i_3}d(ceg)^{i_4}feg(ceg)^{m-3-i_1-i_2-i_3-i_4}$ 
    \item[7)] $(aek)^{i_1}aeh(aek)^{i_2}b(eka)^{i_3}dk(aek)^{i_4}afk(aek)^{m-3-i_1-i_2-i_3-i_4}\otimes\\(ceg)^{i_1}ced(ceg)^{i_2}f(egc)^{i_3}bg(ceg)^{i_4}chg(ceg)^{m-3-i_1-i_2-i_3-i_4}$  
    \item[8)] $(aek)^{i_1}aeh(aek)^{i_2}afk(aek)^{i_3}adk(aek)^{i_4}bek(aek)^{m-4-i_1-i_2-i_3-i_4}\otimes\\(ceg)^{i_1}ced(ceg)^{i_2}chg(ceg)^{i_3}fbg(ceg)^{i_4}feg(ceg)^{m-4-i_1-i_2-i_3-i_4}$  
    \item[9)] $(aek)^{i_1}b(eka)^{i_2}fk(aek)^{i_3}ad(kae)^{i_4}h(aek)^{m-2-i_1-i_2-i_3-i_4}\otimes\\(ceg)^{i_1}f(egc)^{i_2}hg(ceg)^{i_3}cb(gce)^{i_4}d(ceg)^{m-2-i_1-i_2-i_3-i_4}$ 
    \item[10)] $(aek)^{i_1}b(eka)^{i_2}f(kae)^{i_3}h(aek)^{i_4}adk(aek)^{m-2-i_1-i_2-i_3-i_4}\otimes\\(ceg)^{i_1}f(egc)^{i_2}h(gce)^{i_3}d(ceg)^{i_4}cbg(ceg)^{m-2-i_1-i_2-i_3-i_4}$ 
    \item[11)] $(aek)^{i_1}afk(aek)^{i_2}b(eka)^{i_3}d(kae)^{i_4}h(aek)^{m-2-i_1-i_2-i_3-i_4}\otimes\\(ceg)^{i_1}chg(ceg)^{i_2}f(egc)^{i_3}b(gce)^{i_4}d(ceg)^{m-2-i_1-i_2-i_3-i_4}$ 
    \item[12)] $(aek)^{i_1}afk(aek)^{i_2}be(kae)^{i_3}h(aek)^{i_4}adk(aek)^{m-3-i_1-i_2-i_3-i_4}\otimes\\(ceg)^{i_1}chg(ceg)^{i_2}fe(gce)^{i_3}d(ceg)^{i_4}cbg(ceg)^{m-3-i_1-i_2-i_3-i_4}$ 
    \item[13)] $(aek)^{i_1}ad(kae)^{i_2}h(eka)^{i_3}b(eka)^{i_4}fk(aek)^{m-2-i_1-i_2-i_3-i_4}\otimes\\(ceg)^{i_1}cb(gce)^{i_2}d(ceg)^{i_3}f(egc)^{i_4}hg(ceg)^{m-2-i_1-i_2-i_3-i_4}$ 
    \item[14)] $(aek)^{i_1}ad(kae)^{i_2}h(aek)^{i_3}afk(aek)^{i_4}bek(aek)^{m-3-i_1-i_2-i_3-i_4}\otimes\\(ceg)^{i_1}cb(gce)^{i_2}d(ceg)^{i_3}chg(ceg)^{i_4}feg(ceg)^{m-3-i_1-i_2-i_3-i_4}$ 
    \item[15)] $(aek)^{i_1}aeh(aek)^{i_2}adk(aek)^{i_3}b(eka)^{i_4}fk(aek)^{m-3-i_1-i_2-i_3-i_4}\otimes\\(ceg)^{i_1}ced(ceg)^{i_2}cbg(ceg)^{i_3}f(egc)^{i_4}hg(ceg)^{m-3-i_1-i_2-i_3-i_4}$
    \item[16)] $(aek)^{i_1}aeh(aek)^{i_2}adk(aek)^{i_3}afk(aek)^{i_4}bek(aek)^{m-4-i_1-i_2-i_3-i_4}\otimes\\(ceg)^{i_1}ced(ceg)^{i_2}cbg(ceg)^{i_3}chg(ceg)^{i_4}feg(ceg)^{m-4-i_1-i_2-i_3-i_4}$
    \item[17)] $(aek)^{i_1}b(eka)^{i_2}d(kae)^{i_3}h(aek)^{i_4}afk(aek)^{m-2-i_1-i_2-i_3-i_4}\otimes\\(ceg)^{i_1}f(egc)^{i_2}b(gce)^{i_3}d(ceg)^{i_4}chg(ceg)^{m-2-i_1-i_2-i_3-i_4}$
    \item[18)] $(aek)^{i_1}be(kae)^{i_2}h(aek)^{i_3}adk(aek)^{i_4}afk(aek)^{m-3-i_1-i_2-i_3-i_4}\otimes\\(ceg)^{i_1}fe(gce)^{i_2}d(ceg)^{i_3}cbg(ceg)^{i_4}chg(ceg)^{m-3-i_1-i_2-i_3-i_4}$
    \item[19)] $(aek)^{i_1}afk(aek)^{i_2}ad(kae)^{i_3}h(aek)^{i_4}bek(aek)^{m-3-i_1-i_2-i_3-i_4}\otimes\\(ceg)^{i_1}chg(ceg)^{i_2}cb(gce)^{i_3}d(ceg)^{i_4}feg(ceg)^{m-3-i_1-i_2-i_3-i_4}$
    \item[20)] $(aek)^{i_1}af(kae)^{i_2}h(kae)^{i_3}adk(aek)^{i_4}bek(aek)^{m-3-i_1-i_2-i_3-i_4}\otimes\\(ceg)^{i_1}ch(gce)^{i_2}d(ceg)^{i_3}cbg(ceg)^{i_4}feg(ceg)^{m-3-i_1-i_2-i_3-i_4}$  
    \item[21)] $(aek)^{i_1}adk(aek)^{i_2}b(eka)^{i_3}f(kae)^{i_4}h(aek)^{m-2-i_1-i_2-i_3-i_4}\otimes\\(ceg)^{i_1}cbg(ceg)^{i_2}f(egc)^{i_3}h(gce)^{i_4}d(ceg)^{m-2-i_1-i_2-i_3-i_4}$ 
    \item[22)] $(aek)^{i_1}adk(aek)^{i_2}afk(aek)^{i_3}be(kae)^{i_4}h(aek)^{m-3-i_1-i_2-i_3-i_4}\otimes\\(ceg)^{i_1}cbg(ceg)^{i_2}chg(ceg)^{i_3}fe(gce)^{i_4}d(ceg)^{m-3-i_1-i_2-i_3-i_4}$ 
    \item[23)] $(aek)^{i_1}aeh(aek)^{i_2}b(eka)^{i_3}fk(aek)^{i_4}adk(aek)^{m-3-i_1-i_2-i_3-i_4}\otimes\\(ceg)^{i_1}ced(ceg)^{i_2}f(egc)^{i_3}hg(ceg)^{i_4}cbg(ceg)^{m-3-i_1-i_2-i_3-i_4}$ 
    \item[24)] $(aek)^{i_1}aeh(aek)^{i_2}afk(aek)^{i_3}b(eka)^{i_4}dk(aek)^{m-3-i_1-i_2-i_3-i_4}\otimes\\(ceg)^{i_1}ced(ceg)^{i_2}chg(ceg)^{i_3}f(egc)^{i_4}bg(ceg)^{m-3-i_1-i_2-i_3-i_4}$ 
\end{enumerate}
Variations of $(aek)^{m-1}bfg$:
\begin{enumerate}
    \item[1)] $(aek)^{i_1}b(eka)^{i_2}f(kae)^{i_3}g(aek)^{m-1-i_1-i_2-i_3}\otimes\\(ceg)^{i_1}f(egc)^{i_2}h(gce)^{i_3}a(ceg)^{m-1-i_1-i_2-i_3}$ 
    \item[2)] $(aek)^{i_1}afk(aek)^{i_2}be(kae)^{i_3}g(aek)^{m-2-i_1-i_2-i_3}\otimes\\(ceg)^{i_1}chg(ceg)^{i_2}fe(gce)^{i_3}a(ceg)^{m-2-i_1-i_2-i_3}$ 
    \item[3)] $(aek)^{i_1}be(kae)^{i_2}g(aek)^{i_3}afk(aek)^{m-2-i_1-i_2-i_3}\otimes\\(ceg)^{i_1}fe(gce)^{i_2}a(ceg)^{i_3}chg(ceg)^{m-2-i_1-i_2-i_3}$ 
    \item[4)] $(aek)^{i_1}af(kae)^{i_2}g(aek)^{i_3}bek(aek)^{m-2-i_1-i_2-i_3}\otimes\\(ceg)^{i_1}ch(gce)^{i_2}a(ceg)^{i_3}feg(ceg)^{m-2-i_1-i_2-i_3}$ 
    \item[5)] $(aek)^{i_1}aeg(aek)^{i_2}b(eka)^{i_3}fk(aek)^{m-2-i_1-i_2-i_3}\otimes\\(ceg)^{i_1}cea(ceg)^{i_2}f(egc)^{i_3}hg(ceg)^{m-2-i_1-i_2-i_3}$ 
    \item[6)] $(aek)^{i_1}aeg(aek)^{i_2}afk(aek)^{i_3}bek(aek)^{m-3-i_1-i_2-i_3}\otimes\\(ceg)^{i_1}cea(ceg)^{i_2}chg(ceg)^{i_3}feg(ceg)^{m-3-i_1-i_2-i_3}$ 
\end{enumerate}
Variations of $(aek)^{m-1}cdh$:
\begin{enumerate}
    \item[1)] $(aek)^{i_1}c(eka)^{i_2}d(kae)^{i_3}h(aek)^{m-1-i_1-i_2-i_3}\otimes\\(ceg)^{i_1}k(egc)^{i_2}b(gce)^{i_3}d(ceg)^{m-1-i_1-i_2-i_3}$ 
    \item[2)] $(aek)^{i_1}ce(kae)^{i_2}h(aek)^{i_3}adk(aek)^{m-2-i_1-i_2-i_3}\otimes\\(ceg)^{i_1}k(egc)^{i_2}ed(ceg)^{i_3}cbg(ceg)^{m-2-i_1-i_2-i_3}$ 
    \item[3)] $(aek)^{i_1}aeh(aek)^{i_2}c(eka)^{i_3}dk(aek)^{m-2-i_1-i_2-i_3}\otimes\\(ceg)^{i_1}ced(ceg)^{i_2}k(egc)^{i_3}bg(ceg)^{m-2-i_1-i_2-i_3}$ 
    \item[4)] $(aek)^{i_1}adk(aek)^{i_2}ce(kae)^{i_3}h(aek)^{m-2-i_1-i_2-i_3}\otimes\\(ceg)^{i_1}cbg(ceg)^{i_2}k(egc)^{i_3}ed(ceg)^{m-2-i_1-i_2-i_3}$ 
    \item[5)] $(aek)^{i_1}ad(kae)^{i_2}h(aek)^{i_3}cek(aek)^{m-2-i_1-i_2-i_3}\otimes\\(ceg)^{i_1}cb(gce)^{i_2}d(ceg)^{i_3}keg(ceg)^{m-2-i_1-i_2-i_3}$ 
    \item[6)] $(aek)^{i_1}aeh(aek)^{i_2}adk(aek)^{i_3}cek(aek)^{m-3-i_1-i_2-i_3}\otimes\\(ceg)^{i_1}ced(ceg)^{i_2}cbg(ceg)^{i_3}keg(ceg)^{m-3-i_1-i_2-i_3}$ 
\end{enumerate}
Variations of $(aek)^{m-1}ceg$:
\begin{enumerate}
    \item[1)] $(aek)^laeg(aek)^kcek(aek)^{m-2-l-k}\otimes\\(ceg)^lcea(ceg)^kkeg(ceg)^{m-2-l-k}$ 
    \item[2)] $(aek)^lce(kae)^kg(aek)^{m-2-l-k}\otimes\\(ceg)^lke(gce)^ka(ceg)^{m-1-l-k}$ 
\end{enumerate}
The right components corresponding to our chosen comparing basis contain at least $m-1$ generator $c$ and at least $m-1$ generator $g$. Thus, the unknowns in these linear relations are the Haar state of $aek(ceg)^{m-1}$, $afh(ceg)^{m-1}$, $bdk(ceg)^{m-1}$, $bfgcdh(ceg)^{m-2}$, $bfg(ceg)^{m-1}$, $cdh(ceg)^{m-1}$, and $(ceg)^{m}$. Now we get $6$ linear relations containing $7$ unknowns. To get a solvable linear system, we add the quantum determinant relation:
\begin{equation*}
    \begin{split}
        &h\left((ceg)^{m-1}\right)=h\left(D_q(ceg)^{m-1}\right)\\
        =&h\left(aek(ceg)^{m-1}\right)-q\cdot h\left(afh(ceg)^{m-1}\right)-q\cdot h\left(bdk(ceg)^{m-1}\right)\\
        &+q^2\cdot h\left(bfg(ceg)^{m-1}\right)+q^2\cdot h\left(cdh(ceg)^{m-1}\right)-q^3\cdot h\left(ceg(ceg)^{m-1}\right).
    \end{split}
\end{equation*}
The linear system of order $m$ consisting of the 7 equations is called the \textbf{Source Matrix of order} $\boldsymbol{m}$. Besides the quantum determinant relation, the right-hand-sides of all other linear relations are zero. Thus, the linear system is recursive. The Haar state of $(ceg)^{m-1}$ is solved from the Source Matrix of order $m-1$ and then used as the only non-zero right-hand-side term in the Source Matrix of order $m$. The general solution to the Source Matrix of order $m$ is:
\begin{equation*}
    \begin{split}
        h(aek(ceg)^{m-1})&=\frac{(-q)^{3m-2}(q^2-1)^3(q^4-1)(1+q^4-q^2-q^{2m+2})}{q(q^{2m}-1)^2(q^{2m+2}-1)^2(q^{2m+4}-1)}\\
        h(afh(ceg)^{m-1})&=\frac{(-q)^{3m-2}(q^2-1)^4(q^4-1)}{(q^{2m}-1)^2(q^{2m+2}-1)^2(q^{2m+4}-1)}\\
        h(bdk(ceg)^{m-1})&=\frac{(-q)^{3m-2}(q^2-1)^4(q^4-1)}{(q^{2m}-1)^2(q^{2m+2}-1)^2(q^{2m+4}-1)}\\
        h(bfgcdh(ceg)^{m-2})&=\frac{(-q)^{3m-2}(q^2-1)^4(q^4-1)}{(q^{2m}-1)^2(q^{2m+2}-1)^2(q^{2m+4}-1)}\\
        h(bfg(ceg)^{m-1})&=\frac{(-q)^{3m-1}(q^2-1)^3(q^4-1)}{(q^{2m}-1)(q^{2m+2}-1)^2(q^{2m+4}-1)}\\
        h(cdh(ceg)^{m-1})&=\frac{(-q)^{3m-1}(q^2-1)^3(q^4-1)}{(q^{2m}-1)(q^{2m+2}-1)^2(q^{2m+4}-1)}\\
        h((ceg)^{m})&=\frac{(-q)^{3m}(q^2-1)^2(q^4-1)}{(q^{2m+2}-1)^2(q^{2m+4}-1)}.
    \end{split}
\end{equation*}
For the entries of the Source matrix of order $m$, see Table~\ref{tab:source} on the next page.\\
\\

\begin{landscape}
\begin{table}[htb!]
    \centering
    \caption{The source matrix with all its entries.}
\resizebox{1.65\textheight}{!}{
\begin{tabular}{c|cccccccc}
 & $aekceg$ & $afhceg$ & $bdkceg$ & $bfgcdh$ & $bfgceg$ & $cdhceg$ & $cegceg$ & LHS \\ \hline
$D_q$ & $1$ & $-q$ & $-q$ & $0$ & $q^2$ & $q^2$ & $-q^3$ & $h((ceg)^{m-1})$ \\
 $aekceg$&$\frac{q^2(q^{2n}-1)^2}{q^{2n}(q^2-1)^2}$  & $\frac{-q(q^{2n}-1)^2}{q^{2n}(q^2-1)}$ & $\frac{q^3(1-q^{2n})^3}{q^{4n}(q^2-1)^2}$ & $\frac{(q^3-q^{2n+1})(q^{2n}-1)^3}{q^{4n}(q^2-1)^2}$ & $\frac{(q^{2n}-1)^2}{q^{2n}}$ & $\frac{n(q^{2n}-1)^2}{q^{2n}}$ & $\frac{(q^{2n}-1)((n+1)q^4-2nq^2+n)}{q^{2n+1}(q^2-1)}$ & 0 \\
 $aekbfg$& 0 & $\frac{q^2(q^{2n}-1)^3}{q^{2n}(q^2-1)^3}$ & 0 & 0 & $\frac{-q(q^{2n}-1)^2}{q^{2n}(q^2-1)}$ & $\frac{-nq(q^{2n}-1)^2}{q^{2n}(q^2-1)}$ & $\frac{(q^{2n}-1)(n-(n+1)q^2)}{q^{2n}(q^2-1)}$ & 0 \\ 
 $aekcdh$& 0 & 0 & $\frac{q^4(q^{2n}-1)^3}{q^{4n}(q^2-1)^3}$ & $\frac{(q^{2n}-1)^3(q^{2n+2}-q^4)}{q^{4n}(q^2-1)^3}$ & $\frac{-q(q^{2n}-1)^2}{q^{2n}(q^2-1)}$ & $\frac{-nq(q^{2n}-1)^2}{q^{2n}(q^2-1)}$ & $\frac{(q^{2n}-1)(n-(n+1)q^2)}{q^{2n}(q^2-1)}$ & 0 \\
 $afhbdk$& 0 & 0 & 0 & $\frac{(q^{2n}-1)^3(q^{2n+2}-q^4)}{q^{4n}(q^2-1)^4}$ & $c_1$ & $c_2$ & $c_3$ & 0 \\
 $aekbdk$& 0 & 0 & 0 & 0 & $\frac{q^2(q^{2n}-1)^2}{q^{2n}(q^2-1)^2}$ & 0 & $\frac{q(q^{2n}-1)}{q^{2n}(q^2-1)}$ & 0 \\
 $aekafh$& 0 & 0 & 0 & 0 & 0 & $\frac{q^2(q^{2n}-1)^2}{q^{2n}(q^2-1)^2}$ & $\frac{q(q^{2n}-1)}{q^{2n}(q^2-1)}$ & 0 \\ 
\end{tabular}
}
\label{tab:source}
\end{table}

\begin{equation*}
    \begin{split}
        c_1&=\frac{(q^{2n}-1)^2(q^{2n-2}+(n-1)q^{2n}-(n-1)q^{2n+2}-1)}{q^{4n-3}(q^2-1)^3}\\
        c_2&=\frac{(q^{2n}-1)^2(nq^{2n-2}-(n-1)q^{2n}-1)}{q^{4n-3}(q^2-1)^3}\\
        c_3&=\frac{(q^{2n}-1)(q^{2n+4}-n(q^4-1)q^{2n}-q^2)}{q^{4n}(q^2-1)^2}
    \end{split}
\end{equation*}
\end{landscape}

\noindent Through direct computation, we can verify that the Source Matrix of order $2$ and $3$ fit in the general form.  In following subsections we provide the steps to compute the contribution of tensor products in the forms of:
\begin{enumerate}
    \item[1)] $(aek)^lafh(aek)^{m-1-l}\otimes\\(ceg)^lchd(ceg)^{m-1-l}$
    \item[2)] $(aek)^lafk(aek)^kaeh(aek)^{m-2-l-k}\otimes\\(ceg)^lchg(ceg)^{k}ced(ceg)^{m-2-l-k}$
    \item[3)] $(aek)^laeh(aek)^kafk(aek)^{m-2-l-k}\otimes\\(ceg)^lced(ceg)^{k}chg(ceg)^{m-2-l-k}$
\end{enumerate}
to each linear relation in the Source Matrix. The contributions of other tensor products to the Source Matrix are computed in a similar way.

\subsubsection{Decomposition of left components}\label{2.1.1}

\hfill

\hfill

\noindent We start with left components in the form of $(aek)^lafh(aek)^{m-1-l}$. Besides considering the coefficient of $(aek)^{m-1}afh$, the decomposition of the above Left components may contains other standard monomials with $m-1$ generator $a$ and $m-1$ generator $k$. Since there is only $m-1$ generator $k$'s in these left components, we may ignore the new monomials generated by switching $k$ with other generators.
\begin{equation*}
    \begin{split}
        &(aek)^lafh(aek)^{m-1-l}\\
        =&(aek)^{m-1}afh+(q-q^{-1})\sum_{i=1}^{m-1-l}(aek)^{l+i-1}abdk(aek)^{m-1-l-i}fh+\cdots\\
        &-(q-q^{-1})*\sum_{i=1}^{m-1-l}(aek)^la(aek)^{i-1}cdek(aek)^{m-1-l-i}h\\
        &-(q-q^{-1})*\sum_{i=1}^{m-1-l}(aek)^laf(aek)^{i-1}bgek(aek)^{m-1-l-i}
    \end{split}
\end{equation*}
Notice that every term appearing in the decomposition contains exactly $m-1$ generator $a$ and $m-1$ generator $k$ besides $(aek)^{m-1}afh$. Thus, we have:
\begin{equation*}
    \begin{split}
        &(aek)^{l+i-1}abdk(aek)^{m-1-l-i}fh\\
        =&(aek)^{m-2}afhbdk+(q^2-1)*(aek)^{m-1}bfg+(q^2-1)*(aek)^{m-1}cdh\\
        &-(q^2-1)^2/q*(aek)^{m-1}ceg\cdots,
    \end{split}
\end{equation*}
and
\begin{equation*}
    \begin{split}
        &(aek)^la(aek)^{i-1}cdek(aek)^{m-1-l-i}h\\
        =&q^2*(aek)^{m-1}cdh+\cdots,
    \end{split}
\end{equation*}
and
\begin{equation*}
    \begin{split}
        &(aek)^laf(aek)^{i-1}bgek(aek)^{m-1-l-i}\\
        =&q^2*(aek)^{m-1}bfg-(q^3-q)*(aek)^{m-1}ceg+\cdots.
    \end{split}
\end{equation*}
Together, we have:
\begin{equation*}
    \begin{split}
        &(aek)^lafh(aek)^{m-1-l}\\
        =&(aek)^{m-1}afh+(q-q^{-1})(m-1-l)*(aek)^{m-2}afhbdk\\
        &-(q-q^{-1})(m-1-l)*(aek)^{m-1}bfg-(q-q^{-1})(m-1-l)*(aek)^{m-1}cdh\\
        &+(q-q^{-1})^2(m-1-l)*(aek)^{m-1}ceg+\cdots
    \end{split}
\end{equation*}
Then, we consider left components in the form of $(aek)^lafk(aek)^kaeh(aek)^{m-2-l-k}$. Again, we may ignore the new monomials generated from switching generator $k$ with other generators. Thus, we get:
\begin{equation*}
    (aek)^lafk(aek)^kaeh(aek)^{m-2-l-k}=(aek)^laf(aek)^{k+1}h(aek)^{m-2-l-k}+\cdots
\end{equation*}
For $(aek)^laf(aek)^{k+1}h(aek)^{m-2-l-k}$, we have:
\begin{equation*}
    \begin{split}
        &(aek)^laf(aek)^{k+1}h(aek)^{m-2-l-k}\\
        =&(aek)^lafh(aek)^{m-1-l}\\
        &+(q^2-1)q(k+1)*(aek)^lbfg(aek)^{m-2-l}\\
        &-(q^2-1)^2(k+1)*(aek)^lceg(aek)^{m-2-l}+\cdots\\
    \end{split}
\end{equation*}
Then, using the result of the decomposition of $(aek)^lafh(aek)^{m-1-l}$, we get:
\begin{equation*}
    \begin{split}
        &(aek)^lafk(aek)^kaeh(aek)^{m-2-l-k}\\
        =&(aek)^{m-1}afh+(q-q^{-1})(m-1-l)*(aek)^{m-2}afhbdk\\
        &+(q-q^{-1})\left[(k+1)q^2-(m-1-l)\right]*(aek)^{m-1}bfg\\
        &-(q-q^{-1})(m-1-l)*(aek)^{m-1}cdh\\
        &+(q-q^{-1})^2\left[(m-1-l)-(k+1)q^2\right]*(aek)^{m-1}ceg+\cdots
    \end{split}
\end{equation*}
Finally we consider left components in the form of $(aek)^laeh(aek)^kafk(aek)^{m-2-l-k}$. Similar to previous two cases, we may ignore the new monomials generated from switching generator $k$ with other generators. Thus, we get:
\begin{equation*}
    (aek)^laeh(aek)^kafk(aek)^{m-2-l-k}=q^2*(aek)^{l+1}h(aek)^{k}af(aek)^{m-2-l-k}+\cdots
\end{equation*}
For $(aek)^{l+1}h(aek)^{k}af(aek)^{m-2-l-k}$, we have:
\begin{equation*}
    \begin{split}
        &(aek)^{l+1}h(aek)^{k}af(aek)^{m-2-l-k}\\
        =&(aek)^{l+k+1}afh(aek)^{m-2-l-k}\\
        &-(q-q^{-1})*\sum_{i=1}^{k+1}(aek)^{k+l+1}bfg(aek)^{m-2-l-k}+\cdots\\
    \end{split}
\end{equation*}
Then, using the result of the decomposition of $(aek)^lafh(aek)^{m-1-l}$, we get:
\begin{equation*}
    \begin{split}
        &(aek)^laeh(aek)^kafk(aek)^{m-2-l-k}\\
        =&q^2*(aek)^{l+1}h(aek)^{k}af(aek)^{m-2-l-k}+\cdots\\
        =&q^2*(aek)^{m-1}afh+q^2(q-q^{-1})(m-2-l-k)*(aek)^{m-2}afhbdk\\
        &-q^2(q-q^{-1})(m-1-l)*(aek)^{m-1}bfg\\
        &-q^2(q-q^{-1})(m-2-l-k)*(aek)^{m-1}cdh\\
        &+q^2(q-q^{-1})^2(m-2-l-k)*(aek)^{m-1}ceg+\cdots
    \end{split}
\end{equation*}
\subsubsection{Decomposition of right components}

\hfill

\hfill

\noindent When the left component is $(aek)^lafh(aek)^{m-1-l}$, the corresponding right component is $(ceg)^lchd(ceg)^{m-1-l}$. We have:
\begin{equation*}
    (ceg)^lchd(ceg)^{m-1-l}=cdh(ceg)^{m-1}-(q-q^{-1})*(ceg)^m
\end{equation*}
Recall the decomposition of $(aek)^lafh(aek)^{m-1-l}$ in subsection \ref{2.1.1}. By summing over index $l$, the contribution of $(ceg)^lchd(ceg)^{m-1-l}$ to the linear relation corresponding to comparing basis $\boldsymbol{(aek)^{m-1}afh}$ is:
$$m*cdh(ceg)^{m-1}-m(q-q^{-1})*(ceg)^{m}.$$
The contribution to the linear relation corresponding to comparing basis\\ $\boldsymbol{(aek)^{m-2}afhbdk}$ is:
\begin{equation*}
    \begin{split}
        &(q-q^{-1})\sum_{l=0}^{m-1}(m-1-l)*\left[cdh(ceg)^{m-1}-(q-q^{-1})*(ceg)^m\right]\\
        =&(q-q^{-1})\frac{m(m-1)}{2}*cdh(ceg)^{m-1}-(q-q^{-1})^2\frac{m(m-1)}{2}*(ceg)^m
    \end{split}
\end{equation*}
Similarly, the contribution to the linear relations corresponding to comparing basis $\boldsymbol{(aek)^{m-1}bfg}$ and $\boldsymbol{(aek)^{m-1}cdh}$ are the same:
\begin{equation*}
    \begin{split}
        &-(q-q^{-1})\sum_{l=0}^{m-1}(m-1-l)*\left[cdh(ceg)^{m-1}-(q-q^{-1})*(ceg)^m\right]\\
        =&-(q-q^{-1})\frac{m(m-1)}{2}*cdh(ceg)^{m-1}+(q-q^{-1})^2\frac{m(m-1)}{2}*(ceg)^m
    \end{split}
\end{equation*}
The contribution to the linear relations corresponding to comparing basis\\ $\boldsymbol{(aek)^{m-1}ceg}$ is:
\begin{equation*}
    \begin{split}
        &(q-q^{-1})^2\sum_{l=0}^{m-1}(m-1-l)*\left[cdh(ceg)^{m-1}-(q-q^{-1})*(ceg)^m\right]\\
        =&(q-q^{-1})^2\frac{m(m-1)}{2}*cdh(ceg)^{m-1}-(q-q^{-1})^3\frac{m(m-1)}{2}*(ceg)^m
    \end{split}
\end{equation*}
When the left component is $(aek)^lafk(aek)^kaeh(aek)^{m-2-l-k}$, the corresponding right component is $(ceg)^lchg(ceg)^{k}ced(ceg)^{m-2-l-k}$. We have:
\begin{equation*}
    \begin{split}
        &(ceg)^lchg(ceg)^{k}ced(ceg)^{m-2-l-k}\\
        =&q^{-2k-2}*cdh(ceg)^{m-1}-q^{-2k-2}(q-q^{-1})*(ceg)^m
    \end{split}
\end{equation*}
Recall the decomposition of $(aek)^lafk(aek)^kaeh(aek)^{m-2-l-k}$ in subsection \ref{2.1.1}. We write $\mathcal{F}=cdh(ceg)^{m-1}-(q-q^{-1})*(ceg)^m$. By summing over index $l$ and $k$, the contribution of $(ceg)^lchg(ceg)^{k}ced(ceg)^{m-2-l-k}$ to the linear relation corresponding to comparing basis $\boldsymbol{(aek)^{m-1}afh}$ is:
\begin{equation*}
    \begin{split}
        &\sum_{l=0}^{m-2}\sum_{k=0}^{m-2-l}\left[q^{-2k-2}*cdh(ceg)^{m-1}-q^{-2k-2}(q-q^{-1})*(ceg)^m\right]\\
        =&\left[(m-1)\frac{1}{q^2-1}+\frac{q^{-2m+2}-1}{(1-q^{2})^2}\right]*\mathcal{F}
    \end{split}
\end{equation*}
The contribution to the linear relation corresponding to comparing basis\\ $\boldsymbol{(aek)^{m-2}afhbdk}$ is:
\begin{equation*}
    \begin{split}
        &\sum_{l=0}^{m-2}\sum_{k=0}^{m-2-l}(q-q^{-1})(m-1-l)q^{-2k-2}*\mathcal{F}\\
        =&\left[\frac{m(m-1)}{2q}-\frac{q^{-2}-mq^{-2m}+(m-1)q^{-2m-2}}{q(1-q^{-2})^2}\right]*\mathcal{F}\\
    \end{split}
\end{equation*}
The contribution to the linear relations corresponding to comparing basis $\boldsymbol{(aek)^{m-1}bfg}$ is:
\begin{equation*}
    \begin{split}
        &\sum_{k=0}^{m-2}\sum_{l=0}^{m-2-k}(q-q^{-1})\left[(k+1)q^2-(m-1-l)\right]q^{-2k-2}*\mathcal{F}\\
        =&\frac{(mq^5-2mq^3+mq+q^5+q^3-q)(q^{-2m}-1)}{(1-q^2)^2}*\mathcal{F}\\
        &+\frac{4mq^6-(m^2+5m)q^4+2m^2q^2-m^2+m}{2q(1-q^2)^2}*\mathcal{F}
    \end{split}
\end{equation*}
The contribution to the linear relations corresponding to comparing basis $\boldsymbol{(aek)^{m-1}cdh}$ is:
\begin{equation*}
    \begin{split}
        &-\sum_{l=0}^{m-2}\sum_{k=0}^{m-2-l}(q-q^{-1})(m-1-l)q^{-2k-2}*\mathcal{F}\\
        =&\left[\frac{q^{-2}-mq^{-2m}+(m-1)q^{-2m-2}}{q(1-q^{-2})^2}-\frac{m(m-1)}{2q}\right]*\mathcal{F}\\
    \end{split}
\end{equation*}
The contribution to the linear relations corresponding to comparing basis $\boldsymbol{(aek)^{m-1}ceg}$ is:
\begin{equation*}
    \begin{split}
        &\sum_{k=0}^{m-2}\sum_{l=0}^{m-2-k}(q-q^{-1})^2\left[(m-1-l)-(k+1)q^2\right]q^{-2k-2}*\mathcal{F}\\
        =&\frac{(2mq^6-4mq^4+2mq^2+2q^6+2q^4-2q^2)(q^{-2m}-1)}{2q^2(1-q^2)}*\mathcal{F}\\
        &+\frac{4mq^6-(m^2+5m)q^4+2m^2q^2-m^2+m}{2q^2(1-q^2)}*\mathcal{F}
    \end{split}
\end{equation*}
When the left component is $(aek)^laeh(aek)^kafk(aek)^{m-2-l-k}$, the corresponding right component is $(ceg)^lced(ceg)^{k}chg(ceg)^{m-2-l-k}$. We have:
\begin{equation*}
    \begin{split}
        (ceg)^lced(ceg)^{k}chg(ceg)^{m-2-l-k}=q^{2k}*cdh(ceg)^{m-1}\\
    \end{split}
\end{equation*}
Recall the decomposition of $(aek)^laeh(aek)^kafk(aek)^{m-2-l-k}$ in subsection \ref{2.1.1}. By summing over index $l$ and $k$, the contribution of $(ceg)^lced(ceg)^{k}chg(ceg)^{m-2-l-k}$ to the linear relation corresponding to comparing basis $\boldsymbol{(aek)^{m-1}afh}$ is:
\begin{equation*}
    \begin{split}
        &\sum_{l=0}^{m-2}\sum_{k=0}^{m-2-l}q^{2k+2}*cdh(ceg)^{m-1}\\
        =&\left[(m-1)\frac{1}{q^{-2}-1}+\frac{q^{2m-2}-1}{(1-q^{-2})^2}\right]*cdh(ceg)^{m-1}\\
    \end{split}
\end{equation*}
The contribution to the linear relation corresponding to comparing basis\\ $\boldsymbol{(aek)^{m-2}afhbdk}$ is:
\begin{equation*}
    \begin{split}
        &\sum_{l=0}^{m-2}\sum_{k=0}^{m-2-l}(q-q^{-1})(m-2-l-k)q^{2k+2}*cdh(ceg)^{m-1}\\
        =&\left[\frac{q^{2m-3}-(m-1)q+(m-2)q^{-1}}{(1-q^{-2})^2}-\frac{(m-2)(m-1)q}{2}\right]*cdh(ceg)^{m-1}
    \end{split}
\end{equation*}
The contribution to the linear relations corresponding to comparing basis\\ $\boldsymbol{(aek)^{m-1}bfg}$ is:
\begin{equation*}
    \begin{split}
        &\sum_{l=0}^{m-2}\sum_{k=0}^{m-2-l}-(q-q^{-1})(m-1-l)q^{2k+2}*cdh(ceg)^{m-1}\\
        =&\left[\frac{m(m-1)q}{2}-\frac{q^3-mq^{2m+1}+(m-1)q^{2m+3}}{(1-q^2)^2}\right]*cdh(ceg)^{m-1}
    \end{split}
\end{equation*}
The contribution to the linear relations corresponding to comparing basis\\ $\boldsymbol{(aek)^{m-1}cdh}$ is:
\begin{equation*}
    \begin{split}
        &\sum_{l=0}^{m-2}\sum_{k=0}^{m-2-l}-(q-q^{-1})(m-2-l-k)q^{2k+2}*cdh(ceg)^{m-1}\\
        =&\left[\frac{(m-2)(m-1)q}{2}-\frac{q^{2m-3}-(m-1)q+(m-2)q^{-1}}{(1-q^{-2})^2}\right]*cdh(ceg)^{m-1}
    \end{split}
\end{equation*}
The contribution to the linear relations corresponding to comparing basis\\ $\boldsymbol{(aek)^{m-1}ceg}$ is:
\begin{equation*}
    \begin{split}
        &\sum_{l=0}^{m-2}\sum_{k=0}^{m-2-l}(q-q^{-1})^2(m-2-l-k)q^{2k+2}*cdh(ceg)^{m-1}\\
        =&\left[\frac{q^{2m-2}-(m-1)q^2+(m-2)}{(1-q^{-2})}-\frac{(m-2)(m-1)(q^2-1)}{2}\right]*cdh(ceg)^{m-1}
    \end{split}
\end{equation*}
\subsubsection{Contribution to linear relations corresponding to different comparing basis}

\hfill

\hfill

\noindent The contribution of the 3 types of tensor products determines the linear relation corresponding to comparing basis $(aek)^{m-1}afh$. The coefficient of $cdh(ceg)^{m-1}$ is:
\begin{equation*}
    \begin{split}
        &m+\frac{m-1}{q^2-1}+\frac{q^{-2m+2}-1}{(1-q^2)^2}+\frac{m-1}{q^{-2}-1}+\frac{q^{2m-2}-1}{(1-q^{-2})^2}\\
        =&\frac{q^2(q^m-q^{-m})^2}{(1-q^2)^2}
    \end{split}
\end{equation*}
When computing the coefficient of $(ceg)^m$, we have to consider the coefficient of $(aek)^{m-1}afh$ in $D_q^m$. By Theorem 1 e), terms in $D_q^m$ whose decomposition contains  $(aek)^{m-1}afh$ has to be in the form of $(aek)^lafh(aek)^{m-l-1}$. Thus, the coefficient of $(aek)^{m-1}afh$ in $D_q^m$ is $-mq$. Then, the coefficient of $(ceg)^{m}$ is:
\begin{equation*}
    \begin{split}
        &-m(q-q^{-1})+mq-(q-q^{-1})*\left(\frac{m-1}{q^2-1}+\frac{q^{-2m+2}-1}{(1-q^2)^2}\right)\\
        =&\frac{1}{q}-\frac{q^{-2m+2}-1}{q(q^2-1)}=\frac{q(1-q^{-2m})}{(q^2-1)}
    \end{split}
\end{equation*}
The contribution of the 3 types of tensor products to the coefficient of $cdh(ceg)^{m-1}$ in the linear relation corresponding to comparing basis $(aek)^{m-2}afhbdk$ is:
\begin{equation*}
    \begin{split}
        &(q-q^{-1})\frac{m(m-1)}{2}+\frac{m(m-1)}{2q}-\frac{q^{-2}-mq^{-2m}+(m-1)q^{-2m-2}}{q(1-q^{-2})^2}\\
        &+\frac{q^{2m-3}-(m-1)q+(m-2)q^{-1}}{(1-q^{-2})^2}-\frac{(m-2)(m-1)q}{2}\\
        =&\frac{(1-q^{2m})(mq^2-q^{2m}-m+1)}{q^{2m-1}(1-q^2)^2}
    \end{split}
\end{equation*}
The contribution of the 3 types of tensor products to the coefficient of $(ceg)^{m}$ in the linear relation corresponding to comparing basis $(aek)^{m-2}afhbdk$ is:
\begin{equation*}
    \begin{split}
        &-(q-q^{-1})^2\frac{m(m-1)}{2}-(q-q^{-1})\frac{m(m-1)}{2q}\\
        &+(q-q^{-1})\frac{q^{-2}-mq^{-2m}+(m-1)q^{-2m-2}}{q(1-q^{-2})^2}\\
        =&-(q^2-1)\frac{m(m-1)}{2}+\frac{q^{-2}-mq^{-2m}+(m-1)q^{-2m-2}}{1-q^{-2}}\\
    \end{split}
\end{equation*}
The contribution of the 3 types of tensor products to the coefficient of $cdh(ceg)^{m-1}$ in the linear relation corresponding to comparing basis $(aek)^{m-2}bfg$ is:
\begin{equation*}
    \begin{split}
        &-(q-q^{-1})\frac{m(m-1)}{2}+\frac{m(m-1)q}{2}-\frac{q^3-mq^{2m+1}+(m-1)q^{2m+3}}{(1-q^2)^2}\\
        &+\frac{(2mq^6-4mq^4+2mq^2+2q^6+2q^4-2q^2)(q^{-2m}-1)}{2q(1-q^2)^2}\\
        &+\frac{4mq^6-(m^2+5m)q^4+2m^2q^2-m^2+m}{2q(1-q^2)^2}\\
        =&\frac{2mq^5-(3m+1)q^3+mq}{(1-q^2)^2}+\frac{mq^{2m+1}-(m-1)q^{2m+3}}{(1-q^2)^2}\\
        &+\frac{(mq^5-2mq^3+mq+q^5+q^3-q)(q^{-2m}-1)}{(1-q^2)^2}\\
    \end{split}
\end{equation*}
The contribution of the 3 types of tensor products to the coefficient of $(ceg)^{m}$ in the linear relation corresponding to comparing basis $(aek)^{m-1}bfg$ is:
\begin{equation*}
    \begin{split}
        &(q-q^{-1})^2\frac{m(m-1)}{2}-(q-q^{-1})\frac{4mq^6-(m^2+5m)q^4+2m^2q^2-m^2+m}{2q(1-q^2)^2}\\
        &-(q-q^{-1})\frac{(mq^5-2mq^3+mq+q^5+q^3-q)(q^{-2m}-1)}{(1-q^2)^2}\\
        =&\frac{(5m-m^2)q^4+(2m^2-8m)q^2+(3m-m^2)}{2(1-q^2)}\\
        &+\frac{(mq^4-2mq^2+m+q^4+q^2-1)(q^{-2m}-1)}{1-q^2}\\
    \end{split}
\end{equation*}
Notice that the sign of the coefficient of $(aek)^{m-1}cdh$ is always the opposite as that of $(aek)^{m-2}afhbdk$ in the decomposition of all 3 possible forms of right components. The contribution of the 3 types of tensor products to the coefficient of $cdh(ceg)^{m-1}$ in the linear relation corresponding to comparing basis $(aek)^{m-1}cdh$ is:
\begin{equation*}
    \begin{split}
        &-(q-q^{-1})\frac{m(m-1)}{2}+\frac{q^{-2}-mq^{-2m}+(m-1)q^{-2m-2}}{q(1-q^{-2})^2}-\frac{m(m-1)}{2q}\\
        &+\frac{(m-2)(m-1)q}{2}-\frac{q^{2m-3}-(m-1)q+(m-2)q^{-1}}{(1-q^{-2})^2}\\
        =&-\frac{q^{2m-3}-mq^{-1}+(m-2)q^{-3}+mq^{-2m-1}-(m-1)q^{-2m-3}}{(1-q^{-2})^2}
    \end{split}
\end{equation*}
The contribution of the 3 types of tensor products to the coefficient of $(ceg)^{m}$ in the linear relation corresponding to comparing basis $(aek)^{m-1}cdh$ is:
\begin{equation*}
    \begin{split}
        &(q-q^{-1})^2\frac{m(m-1)}{2}+(q-q^{-1})\frac{m(m-1)}{2q}\\
        &-(q-q^{-1})\frac{q^{-2}-mq^{-2m}+(m-1)q^{-2m-2}}{q(1-q^{-2})^2}\\
        &=(q^2-1)\frac{m(m-1)}{2}-\frac{q^{-2}-mq^{-2m}+(m-1)q^{-2m-2}}{1-q^{-2}}
    \end{split}
\end{equation*}
The contribution of the 3 types of tensor products to the coefficient of $cdh(ceg)^{m-1}$ in the linear relation corresponding to comparing basis $(aek)^{m-1}ceg$ is:
\begin{equation*}
    \begin{split}
        &(q-q^{-1})^2\frac{m(m-1)}{2}+\frac{(2mq^6-4mq^4+2mq^2+2q^6+2q^4-2q^2)(q^{-2m}-1)}{2q^2(1-q^2)}\\
        &+\frac{4mq^6-(m^2+5m)q^4+2m^2q^2-m^2+m}{2q^2(1-q^2)}\\
        &+\frac{q^{2m-2}-(m-1)q^2+(m-2)}{(1-q^{-2})}-\frac{(m-2)(m-1)(q^2-1)}{2}\\
        =&\frac{(mq^4-2mq^2+m+q^4+q^2-1)(q^{-2m}-1)}{(1-q^2)}+\frac{-2mq^4+2mq^2+q^{2m}-1}{q^2-1}
    \end{split}
\end{equation*}
The contribution of the 3 types of tensor products to the coefficient of $(ceg)^{m}$ in the linear relation corresponding to comparing basis $(aek)^{m-1}ceg$ is:
\begin{equation*}
    \begin{split}
        &-(q-q^{-1})^3\frac{m(m-1)}{2}-(q-q^{-1})\frac{4mq^6-(m^2+5m)q^4+2m^2q^2-m^2+m}{2q^2(1-q^2)}\\
        &-(q-q^{-1})\frac{(mq^4-2mq^2+m+q^4+q^2-1)(q^{-2m}-1)}{(1-q^2)}\\
        =&\frac{(3m-m^2-2)q^3}{2}+(m^2-2m-1)q+\frac{m+2-m^2}{2q}+(m+1)q^{-2m+3}\\
        &+(1-2m)q^{-2m+1}+(m-1)q^{-2m-1}
    \end{split}
\end{equation*}

\subsection{The recursive relation for the Haar state of standard monomials in the form of $(cdh)^i(ceg)^{m-i}$}

\hfill

\hfill

\noindent In the following, we will derive the recursive relation of $h\left((cdh)^i(ceg)^{m-i}\right)$. The case $i=1$ is solved in the Source Matrix. We will start with the general case $3\le i\le m-1$ and then the case $i=2, m$.  

\subsubsection{Recursive relation of $3\le i\le m-1$}

\hfill

\hfill

\noindent We will use equation basis $(cdh)^{i-1}(ceg)^{m-i+1}$ and comparing basis $(aek)^{m-1}afh$ to derive the recursive relation of the Haar state of $(cdh)^{i}(ceg)^{m-i}$. In the comultiplication of $(cdh)^{i-1}(ceg)^{m-i+1}$, the left components containing $(aek)^{m-1}afh$ are
\begin{enumerate}
    \item[1)] $(aek)^lafh(aek)^{m-1-l}$
    \item[2)] $(aek)^lafk(aek)^kaeh(aek)^{m-2-l-k}$
    \item[3)] $(aek)^laeh(aek)^kafk(aek)^{m-2-l-k}$ 
\end{enumerate}
\noindent When the left component is in the form $(aek)^lafh(aek)^{m-1-l}$, the coefficient of $(aek)^{m-1}afh$ in the decomposition of $(aek)^lafh(aek)^{m-1-l}$ is $1$ and the corresponding relation components are:
\begin{itemize}
    \item[1)] $(cdh)^lcge(cdh)^{i-2-l}(ceg)^{m-i+1}$ 
    \item[2)] $(cdh)^{i-1}(ceg)^lchd(ceg)^{m-i-l}$
\end{itemize}
When the left component is in the form $(aek)^lafk(aek)^kaeh(aek)^{m-2-l-k}$, the coefficient of $(aek)^{m-1}afh$ in the decomposition of $(aek)^lafk(aek)^kaeh(aek)^{m-2-l-k}$ is $1$ and the corresponding relation components are:
\begin{itemize}
    \item[3)] $(cdh)^lcgh(cdh)^kcde(cdh)^{i-3-l-k}(ceg)^{m-i+1}$
    \item[4)] $(cdh)^lcgh(cdh)^{i-2-l}(ceg)^kced(ceg)^{m-i-k}$
    \item[5)] $(cdh)^{i-1}(ceg)^lchg(ceg)^kced(ceg)^{m-i-1-l-k}$
\end{itemize}
When the left component is in the form $(aek)^laeh(aek)^kafk(aek)^{m-2-l-k}$, the coefficient of $(aek)^{m-1}afh$ in the decomposition of $(aek)^laeh(aek)^kafk(aek)^{m-2-l-k}$ is $q^2$ and the corresponding relation components are:
\begin{itemize}
    \item[6)] $(cdh)^lcde(cdh)^kcgh(cdh)^{i-3-l-k}(ceg)^{m-i+1}$
    \item[7)] $(cdh)^lcde(cdh)^{i-2-l}(ceg)^kchg(ceg)^{m-i-k}$
    \item[8)] $(cdh)^{i-1}(ceg)^lced(ceg)^kchg(ceg)^{m-i-1-l-k}$
\end{itemize}
For case 1), we have:
\begin{equation*}
    (cdh)^lcge(cdh)^{i-2-l}(ceg)^{m-i+1}=(cdh)^{i-2}(ceg)^{m-i+2}
\end{equation*}
The contribution of case 1) to the linear relation is:
\begin{equation*}
    (i-1)*(cdh)^{i-2}(ceg)^{m-i+2}
\end{equation*}
For case 2), we have:
\begin{equation*}
    (cdh)^{i-1}(ceg)^lchd(ceg)^{m-i-l}=(cdh)^{i}(ceg)^{m-i}-(q-1/q)*(cdh)^{i-1}(ceg)^{m-i+1}
\end{equation*}
The contribution of case 2) to the linear relation is:
\begin{equation*}
    (m-i+1)*(cdh)^{i}(ceg)^{m-i}-(m-i+1)(q-1/q)*(cdh)^{i-1}(ceg)^{m-i+1}
\end{equation*}
For case 3), we have:
\begin{equation*}
    \begin{split}
        &(cdh)^lcgh(cdh)^kcde(cdh)^{i-3-l-k}(ceg)^{m-i+1}\\
        &=(cdh)^l(chd)^{k+1}(cdh)^{i-3-l-k}(ceg)^{m-i+2}\\
        &=(cdh)^l(cdh-(q-1/q)*ceg)^{k+1}(cdh)^{i-3-l-k}(ceg)^{m-i+2}\\
        &=\sum_{j=0}^{k+1}(-(q-1/q))^j{k+1\choose j}*(cdh)^{i-2-j}(ceg)^{m-i+2+j}
    \end{split}
\end{equation*}
The contribution of case 3) to the linear relation is:
\begin{equation*}
\begin{split}
    &\sum_{l=0}^{i-3}\sum_{k=0}^{i-3-l}\sum_{j=0}^{k+1}(-(q-1/q))^j{k+1\choose j}*(cdh)^{i-2-j}(ceg)^{m-i+2+j}\\
    &+\sum_{l=0}^{i-3}\sum_{j=1}^{i-2-l}(-(q-1/q))^j{i-1-l\choose j+1}*(cdh)^{i-2-j}(ceg)^{m-i+2+j}\\
    =&\frac{(i-1)(i-2)}{2}*(cdh)^{i-2}(ceg)^{m-i+2}\\
    &+\sum_{k=3}^{i}(-(q-1/q))^{k-2}{i\choose k}*(cdh)^{i-k}(ceg)^{m-i+k}\\
\end{split}
\end{equation*}
For case 4), we have:
\begin{equation*}
    \begin{split}
        &(cdh)^lcgh(cdh)^{i-2-l}(ceg)^kced(ceg)^{m-i-k}\\
        &=q^{-2k-1}*(cdh)^l(chd)^{i-1-l}(ceg)^{m-i+1}\\
        &=q^{-2k-1}\sum_{j=0}^{i-1-l}(-(q-1/q))^j{i-1-l\choose j}*(cdh)^{i-1-j}(ceg)^{m-i+1+j}\\
    \end{split}
\end{equation*}
The contribution of case 4) to the linear relation is:
\begin{equation*}
\begin{split}
    &\sum_{k=0}^{m-i}q^{-2k-1}\sum_{l=0}^{i-2}\sum_{j=0}^{i-1-l}(-(q-1/q))^j{i-1-l\choose j}*(cdh)^{i-1-j}(ceg)^{m-i+1+j}\\
    &\sum_{k=0}^{m-i}q^{-2k-1}\sum_{l=1}^{i-1}\sum_{j=0}^{l}(-(q-1/q))^j{l\choose j}*(cdh)^{i-1-j}(ceg)^{m-i+1+j}\\
    =&q^{-1}\frac{1-q^{-2(m-i+1)}}{1-q^{-2}}((i-1)*(cdh)^{i-1}(ceg)^{m-i+1}\\
    &+\sum_{k=2}^{i}(-(q-1/q))^{k-1}{i \choose k}*(cdh)^{i-k}(ceg)^{m-i+k})
\end{split}
\end{equation*}
For case 5), we have:
\begin{equation*}
    \begin{split}
        &(cdh)^{i-1}(ceg)^lchg(ceg)^kced(ceg)^{m-i-1-l-k}\\
        =&(cdh)^{i-1}(ceg)^lch(ceg)^{k+1}d(ceg)^{m-i-1-l-k}\\
        =&q^{-2k-2}*(cdh)^{i-1}chd(ceg)^{m-i}\\
        =&q^{-2k-2}*((cdh)^{i}(ceg)^{m-i}-(q-1/q)*(cdh)^{i-1}(ceg)^{m-i+1})
    \end{split}
\end{equation*}
The contribution of case 5) to the linear relation is:
\begin{equation*}
\begin{split}
    &\sum_{l=0}^{m-i-1}\sum_{k=0}^{m-i-1-l}q^{-2k-2}*((cdh)^{i}(ceg)^{m-i}-(q-1/q)*(cdh)^{i-1}(ceg)^{m-i+1})\\
    =&(\frac{m-i}{q^2-1}+\frac{q^{-2(m-i)}-1}{(1-q^2)^2})*((cdh)^{i}(ceg)^{m-i}-(q-1/q)*(cdh)^{i-1}(ceg)^{m-i+1})
\end{split}
\end{equation*}
For case 6), we have:
\begin{equation*}
    \begin{split}
        &(cdh)^lcde(cdh)^kcgh(cdh)^{i-3-l-k}(ceg)^{m-i+1}\\
        =&q^2*(cdh)^lcd(cdh)^kh(cdh)^{i-3-l-k}(ceg)^{m-i+2}\\
        =&q^2*(cdh)^lcd(hcd+(q+1/q)*ceg)^kh(cdh)^{i-3-l-k}(ceg)^{m-i+2}\\
        =&q^2\sum_{j=0}^k(q-1/q)^j{k \choose j}*(cdh)^{l+k-j}cd(ceg)^jh(cdh)^{i-3-l-k}(ceg)^{m-i+2}\\
        =&q^2\sum_{j=0}^k(q-1/q)^jq^{2j}{k \choose j}*(cdh)^{i-2-j}(ceg)^{m-i+2+j}\\
    \end{split}
\end{equation*}
The contribution of case 6) to the linear relation is:
\begin{equation*}
\begin{split}
    &q^4\sum_{l=0}^{i-3}\sum_{k=0}^{i-3-l}\sum_{j=0}^k(q-1/q)^jq^{2j}{k \choose j}*(cdh)^{i-2-j}(ceg)^{m-i+2+j}\\
    &=\sum_{k=2}^{i-1}(q-1/q)^{k-2}q^{2k}{i-1\choose k}*(cdh)^{i-k}(ceg)^{m-i+k}
\end{split}
\end{equation*}
For case 7), we have:
\begin{equation*}
    \begin{split}
        &(cdh)^lcde(cdh)^{i-2-l}(ceg)^kchg(ceg)^{m-i-k}\\
        =&q^{2k+1}\sum_{j=0}^{i-2-l}(q-1/q)^jq^{2j}{i-2-l\choose j}*(cdh)^{i-1-j}(ceg)^{m-i+1+j}\\
    \end{split}
\end{equation*}
The contribution of case 7) to the linear relation is:
\begin{equation*}
\begin{split}
    &q^2\sum_{k=0}^{m-i}q^{2k+1}\sum_{l=0}^{i-2}\sum_{j=0}^{i-2-l}(q-1/q)^jq^{2j}{i-2-l\choose j}*(cdh)^{i-1-j}(ceg)^{m-i+1+j}\\
     =&q\frac{1-q^{2(m-i+1)}}{1-q^2}\sum_{k=1}^{i-1}(q-1/q)^{k-1}q^{2k}{i-1\choose k}*(cdh)^{i-k}(ceg)^{m-i+k}
\end{split}
\end{equation*}
For case 8), we have:
\begin{equation*}
    \begin{split}
        &(cdh)^{i-1}(ceg)^lced(ceg)^kchg(ceg)^{m-i-1-l-k}\\
        =&q^{2k}*(cdh)^{i}(ceg)^{m-i}
    \end{split}
\end{equation*}
The contribution of case 8) to the linear relation is:
\begin{equation*}
\begin{split}
    &q^2\sum_{l=0}^{m-i-1}\sum_{k=0}^{m-i-1-l}q^{2k}*(cdh)^{i}(ceg)^{m-i}\\
    =&\left(\frac{(m-i)q^2}{1-q^2}+\frac{q^4(q^{2(m-i)}-1)}{(1-q^{2})^2}\right)*(cdh)^{i}(ceg)^{m-i}
\end{split}
\end{equation*}
The term $(cdh)^{i}(ceg)^{m-i}$ appears in case 2), 5), and 8). Summing the contributions from these cases, the coefficient of $(cdh)^{i}(ceg)^{m-i}$ is:
\begin{equation*}
    \begin{split}
        &\frac{q^2(q^{m-i+1}-q^{-(m-i+1)})^2}{(1-q^2)^2}
    \end{split}
\end{equation*}
The term $(cdh)^{i-1}(ceg)^{m-i+1}$ appears in case 2), 4), 5), 7), and the right-hand side of Equation (\ref{eq:3}). Summing the contributions from these cases, the coefficient of $(cdh)^{i-1}(ceg)^{m-i+1}$ is:
\begin{equation*}
    \begin{split}
        &\frac{-(i-1)q^{2(m-i)+5}+iq^{-2(m-i)-1}-q}{(1-q^2)}
    \end{split}
\end{equation*}
The term $(cdh)^{i-2}(ceg)^{m-i+2}$ appears in case 1), 3), 4), 6), and 7). Notice that if we combine the contribution of case 1) and case 3), we get:
\begin{equation*}
    (i-1)+\frac{(i-1)(i-2)}{2}=\frac{i(i-1)}{2}={i\choose 2}
\end{equation*}
which corresponds to $k=2$ in the summation of case 3). Thus, we can treat $(cdh)^{i-2}(ceg)^{m-i+2}$ in the same way as the general case $(cdh)^{i-k}(ceg)^{m-i+k}$, $3\le k\le i-1$ which appears in case 3), 4), 6), and 7). Summing the contributions from these cases, the coefficient of $(cdh)^{i-k}(ceg)^{m-i+k}$ for $2\le k\le i-1$ is:
\begin{equation*}
    \begin{split}
        &(q^{-1}-q)^{k-2}{i\choose k}q^{-2(m-i+1)}+(q-q^{-1})^{k-2}q^{2k}{i-1\choose k}q^{2(m-i+1)}
    \end{split}
\end{equation*}
Notice that if we put $k=1$ in the above coefficient, we get:
\begin{equation*}
    \begin{split}
        &(q^{-1}-q)^{-1}iq^{-2(m-i+1)}+(q-q^{-1})^{-1}q^2(i-1)q^{2(m-i+1)}\\
        =&\frac{iq^{-2m+2i-1}-(i-1)q^{2m-2i+5}}{1-q^2}
    \end{split}
\end{equation*}
The term $(ceg)^m$ appears in case 3) and 4). Summing the contributions from these cases, the coefficient of $(ceg)^{m}$ is:
\begin{equation*}
\begin{split}
    &(q^{-1}-q)^{i-2}q^{-2(m-i+1)}
\end{split}
\end{equation*}
The expressions of the coefficients of $(cdh)^{i-2}(ceg)^{m-i+2}$ and $(cdh)^{i-k}(ceg)^{m-i+k}$ for $3\le k\le i-1$ are consistent. Thus, the expression of $(cdh)^{i}(ceg)^{m-i}$ is:
\begin{equation}
    \begin{split}
         &\frac{q^2(q^{m-i+1}-q^{-(m-i+1)})^2}{(1-q^2)^2}*h((cdh)^i(ceg)^{m-i})\\
         =&-\frac{q}{q^2-1}*h((cdh)^{i-1}(ceg)^{m-i+1})\\     
         &-\sum_{k=1}^{i-1}c_k*h((cdh)^{i-k}(ceg)^{m-i+k})-(q^{-1}-q)^{i-2}q^{-2(m-i+1)}h((ceg)^{m})\\
    \end{split}\label{recursive:1}
\end{equation}
where
\begin{equation*}
\begin{split}
    c_k=(q^{-1}-q)^{k-2}{i\choose k}q^{-2(m-i+1)}+(q-1/q)^{k-2}q^{2k}{i-1\choose k}q^{2(m-i+1)}.
\end{split}
\end{equation*}
\subsubsection{Special case $i=2$}\label{sp_cs_i=2}

\hfill

\hfill

\noindent We use the linear relation derived from $cdh(ceg)^{m-1}$ and equation basis $(aek)^{m-1}afh$. The situation in this subsection is similar to the previous subsection but we do not have case 3) and 6) anymore. The linear relation in this case only involves $(cdh)^2(ceg)^{m-2}$, $cdh(ceg)^{m-1}$, and $(ceg)^m$. Here, $(ceg)^m$ is the same as $(cdh)^{i-2}(ceg)^{m-i+2}$ in the previous subsection. If we substitute $i=2$ into the contribution of case 3) and 6), we find that the contributions corresponding the two cases are automatically zero. Thus, recursive relation Equation (\ref{recursive:1}) is consistent with the case in subsection \ref{sp_cs_i=2}.

\subsubsection{Special case $i=m$}\label{sp_cs_i=m}

\hfill

\hfill

\noindent We use the linear relation derived from $(cdh)^{m-1}ceg$ and equation basis $(aek)^{m-1}afh$. The situation in this subsection is similar to the previous subsection but we do not have case 5) and 8) anymore. Thus, the coefficient of $(cdh)^m$ is $1$ and the coefficient of $(cdh)^{m-1}ceg$ is:
\begin{equation*}
    \begin{split}
        &-(q-1/q)+q^{-1}(m-1)+q^3(m-1)+mq\\
        &=mq^{-1}+(m-1)q+(m-1)q^3
    \end{split}
\end{equation*}
On the other hand, if we substitute $i=m$ in to the coefficient of $(cdh)^{i}(ceg)^{m-i}=(cdh)^{m}$, we get $1$ and coefficient of $(cdh)^{i-1}(ceg)^{m-i+1}=(cdh)^{m-1}ceg$, we get:
\begin{equation*}
    \begin{split}
        &\frac{mq^{-1}-(m-1)q^{5}-q}{1-q^2}\\
        &=(m-1)q^3+(m-1)q+mq^{-1}
    \end{split}
\end{equation*}
Thus, recursive relation Equation (\ref{recursive:1}) is consistent with the case in subsection \ref{sp_cs_i=m}.

\subsection{The recursive relation for the Haar state of standard monomials in the form of $(bfg)^i(ceg)^{m-i}$}
\hfill

\hfill

\noindent  We will use equation basis $(bfg)^{i-1}(ceg)^{m-i+1}$ and comparing basis $(aek)^{m-1}bdk$ to derive the recursive relation for $h\left((bfg)^i(ceg)^{m-i}\right)$. The computation involved in finding this recursive relation is similar to that of finding the recursive relation of $h\left((cdh)^i(ceg)^{m-i}\right)$ and the derived recursive relation of $h\left((bfg)^i(ceg)^{m-i}\right)$ is the same as Equation (\ref{recursive:1}).

\section[General algorithm to compute the Haar states of standard monomials on $\mathcal{O}(SL_q(3))$]{General algorithm to compute the Haar states of standard monomials on $\mathcal{O}(SL_q(3))$}
\noindent In this section, we assume that the Haar states of all standard monomials of order $m-1$ are known and we want to compute the Haar states of all standard monomials of order $m$. For the simplicity of our argument, we will show that our proposed algorithm is able to compute the Haar state of all standard monomials, not just the monomials basis we picked.

\subsection{Zigzag recursive pattern for standard monomials in the form\\ $(bdk)^r(bfg)^{s}(cdh)^j(ceg)^{m-i-j}$ and $(afh)^r(cdh)^{s}(bfg)^j(ceg)^{m-i-j}$ with $r+s=1$}

\hfill

\hfill

We start with the Haar state of $(bdk)^r(bfg)^{s}(cdh)^j(ceg)^{m-i-j}$, $r+s=1$. In this case, we compute the Haar state of monomials in form $bdk(cdh)^{j-1}(ceg)^{m-j}$ and $bfg(cdh)^{j-1}(ceg)^{m-j}$. We use an induction on the value $j$. We know the Haar state for case $j=1$ from the solution of the source matrix of order $m$. For $j=2$, the Haar state of $bfgcdh(ceg)^{m-2}$ is know as well. To compute the Haar state of $bdkcdh(ceg)^{m-2}$, we use the linear relation derived from equation basis $bdk(ceg)^{m-1}$ and comparing basis $(aek)^{m-1}afh$. Assume we have solved all the Haar state for $j\le t-1$. Then, we can compute the Haar state of $bfg(cdh)^t(ceg)^{m-t-1}$ by the linear relation derived from equation basis $bfg(cdh)^{t-1}(ceg)^{m-t}$ and comparing basis $(aek)^{m-1}afh$. Next, we compute the Haar state of of $bdk(cdh)^t(ceg)^{m-t-1}$ by the linear relation derived from equation basis $bdk(cdh)^{t-1}(ceg)^{m-t}$ and comparing basis $(aek)^{m-1}afh$. During the process, the only monomial with unknown Haar state appearing in the linear relation is the monomial which we are pursuing. The order that we used to compute these Haar states are depicted in Appendix \ref{apd:d}. Since we solve the Haar state of ${bfg(cdh)^j(ceg)^{m-1-j}}$ and ${bdk(cdh)^j(ceg)^{m-1-j}}$ in a ``zigzag'' pattern in the figure, we call this recursive relation the {\bf Zigzag recursive relation}.

\hfill

\noindent We can compute the Haar states of monomials in form $afh(bfg)^{j}(ceg)^{m-j-1}$ and $cdh(bfg)^{j}(ceg)^{m-j-1}$ in the same order. When we derive linear relations, we use equation basis $afh(bfg)^{j-1}(ceg)^{m-j}$ to substitute $bdk(cdh)^{j-1}(ceg)^{m-j}$ and $cdh(bfg)^{j-1}(ceg)^{m-j}$ to substitute $bfg(cdh)^{j-1}(ceg)^{m-j}$ and use comparing basis $(aek)^{m-1}bdk$ to substitute $(aek)^{m-1}afh$.

\subsection{Standard monomials ending with $\boldsymbol{(ceg)^{m-2}}$ and standard monomials ending with $\boldsymbol{bfgbfg(ceg)^{m-3}}$, $\boldsymbol{cdhcdh(ceg)^{m-3}}$, and $\boldsymbol{bfgcdh(ceg)^{m-3}}$.}

\subsubsection{Standard monomials ending with $cdh(ceg)^{m-2}$ or $bfg(ceg)^{m-2}$}

\hfill \\

First, notice that if we choose $(bfg)^2(ceg)^{m-2}$ as the equation basis and use $(aek)^{m-1}afh$ as the comparing basis, the derived linear relation only includes the Haar state of $bdkbfg(ceg)^{m-2}$, $cdh(bfg)^2(ceg)^{m-3}$, $(bfg)^2(ceg)^{m-2}$, and $bfg(ceg)^{m-1}$. Combining the results from previous subsections, we can find the Haar state of ${bdkbfg(ceg)^{m-2}}$.

\hfill

\noindent Similarly, if we choose $(cdh)^2(ceg)^{m-2}$ as the equation basis and use $(aek)^{m-1}bdk$ as the comparing basis, the derived linear relation only includes the Haar state of $afhcdh(ceg)^{m-2}$, $bfg(cdh)^2(ceg)^{m-3}$, $(cdh)^2(ceg)^{m-2}$, and $cdh(ceg)^{m-1}$. Combining the results from previous subsections, we can find the Haar state of ${afhcdh(ceg)^{m-2}}$.

\hfill

\noindent Next, we consider equation bases $aek(ceg)^{m-1}$ with comparing basis $(aek)^{m-1}afh$ and $(aek)^{m-1}bdk$. The linear relation derived by comparing basis $(aek)^{m-1}afh$ includes $aekcdh(ceg)^{m-2}$, $afhcdh(ceg)^{m-2}$, $aek(ceg)^{m-1}$, and $afh(ceg)^{m-1}$. Thus, we can solve the Haar state of ${aekcdh(ceg)^{m-2}}$ from this linear relation. Similarly, The linear relation derived by comparing basis $(aek)^{m-1}bdk$ includes $aekbfg(ceg)^{m-2}$, $afhbfg(ceg)^{m-2}$, $aek(ceg)^{m-1}$, and $afh(ceg)^{m-1}$. Thus, we can solve the Haar state of ${aekbfg(ceg)^{m-2}}$ from this linear relation.

\subsubsection{Standard monomials ending with $cdhcdh(ceg)^{m-3}$ or $bfgbfg(ceg)^{m-3}$}

\hfill

\hfill

First, we consider standard monomials ending with $cdhcdh(ceg)^{m-3}$. They are $ceg(cdh)^2(ceg)^{m-3}$ $(cdh)^3(ceg)^{m-3}$, $bfg(cdh)^2(ceg)^{m-3}$, $bdk(cdh)^2(ceg)^{m-3}$,\\ $afh(cdh)^2(ceg)^{m-3}$, and $aek(cdh)^2(ceg)^{m-3}$. Here, the Haar states of $afh(cdh)^2(ceg)^{m-3}$ and $aek(cdh)^2(ceg)^{m-3}$ are still unknown. To compute the Haar states of the two monomial, we construct a $2\times 2$ linear system consisting of:
\begin{itemize}
    \item[1)] the quantum determinant condition: $D_q*(cdh)^2(ceg)^{m-3}=(cdh)^2(ceg)^{m-3}$;
    \item[2)] the linear relation derived from equation basis $afhcdh(ceg)^{m-2}$ and comparing basis $(aek)^mafh$.
\end{itemize}
Similarly, we need to compute the Haar state of monomials $bdk(bfg)^2(ceg)^{m-3}$ and $aek(bfg)^2(ceg)^{m-3}$. We construct a $2\times 2$ linear system consisting of:
\begin{itemize}
    \item[1)] the quantum determinant condition: $D_q*(bfg)^2(ceg)^{m-3}=(bfg)^2(ceg)^{m-3}$;
    \item[2)] the linear relation derived from equation basis $bdkbfg(ceg)^{m-2}$ and comparing basis $(aek)^mbdk$.
\end{itemize}

\subsubsection{Matrix of $aekbfgcdh(ceg)^{m-3}$, $afhbfgcdh(ceg)^{m-3}$, and $bdkbfgcdh(ceg)^{m-3}$}

\hfill \\

To compute the Haar state of these three monomials, we construct a $3\times 3$ linear system consisting of:
\begin{itemize}
    \item[1)] the quantum determinant condition: $D_q*bfgcdh(ceg)^{m-3}=bfgcdh(ceg)^{m-3}$;
    \item[2)] the linear relation derived from equation basis $afhbfg(ceg)^{m-2}$ and comparing basis $(aek)^{m-1}afh$;
    \item[3)] the linear relation derived from equation basis $bdkcdh(ceg)^{m-2}$ and comparing basis $(aek)^{m-1}bdk$.
\end{itemize}
Entries of the system matrix are listed below:

\begin{table}[htb!]
    \centering
    \caption{Entries in the system matrix}
    \resizebox{1\textwidth}{!}{
    \begin{tabular}{|c|c|c|c|}
    \hline
    \diagbox{Relation}{Haar State} &$aekbfgcdh(ceg)^{m-3}$ &$afhbfgcdh(ceg)^{m-3}$ & $bdkbfgcdh(ceg)^{m-3}$ \\ \hline
    Quantum Determinant &1 &$-q$ &$-q$  \\ \hline
     $afhbfg(ceg)^{m-2}$&$(q+1/q)\frac{q^2-q^{-2(m-2)}}{q^2-1}$ &$\frac{(q^{-2(m-1)}-1)(q^6-q^2-q^{2m}+1)}{(q^2-1)^2}$ & 0 \\ \hline
     $bdkcdh(ceg)^{m-2}$&$q+1/q+q(q^2+1)\frac{1-q^{2(m-2)}}{1-q^{2}}$ &$(q^2+1)(q^{-2}-q^{2(m-2)})$ &$\frac{q^{-2(m-2)}+q^{2(m-1)}-q^2-1}{(q^2-1)^2}$  \\ \hline
    \end{tabular}
    }
    \label{tab:my_label}
\end{table}
\noindent Using Gauss elimination, we have:
\begin{equation*}
    \begin{bmatrix}
    1&-q&-q\\
    0&\frac{q^{-2(m-1)}+q^{2m}-q^2-1}{(q^2-1)^2}&\frac{(q^2+1)(q^{-2(m-2)}-q^2)}{1-q^2}\\
    0&\frac{(q^2+1)(q^{-2}-q^{2(m-2)})}{1-q^2}&\frac{q^2(q^{-2(m-1)}+q^{2m}-q^2-1)}{(q^2-1)^2}
    \end{bmatrix}
\end{equation*}
The determinant of the matrix is:
\begin{equation*}
    \begin{split}
        &q^2\left(\frac{q^{-2(m-1)}+q^{2m}-q^2-1}{(q^2-1)^2}\right)^2-\frac{(q^2+1)^2(q^{-2}-q^{2(m-2)})(q^{-2(m-2)}-q^2)}{(1-q^2)^2}\\
        &=\frac{(q^{-2(m-1)}-1)^2(q^4-q^{2m})(1-q^{2(m+2)})}{q^2(q^2-1)^4}.
    \end{split}
\end{equation*}
Since $m\ge 3$, the determinant is always positive for $0< |q|< 1$. Thus, the matrix is invertible.

\subsubsection{Standard monomials with two segments of $aek$, $afh$, or $bdk$ }

\hfill \\

Consider equation basis $bdkbfg(ceg)^{m-2}$ with comparing basis $(aek)^{m-1}afh$. The derived linear relation includes $(bdk)^2(ceg)^{m-2}$, $bdkbfgcdh(ceg)^{m-3}$,\\ $bdkbfg(ceg)^{m-2}$, $(bfg)^2cdh(ceg)^{m-3}$, and $(bfg)^2(ceg)^{m-2}$. Thus, we can solve the Haar state of ${bdkbdk(ceg)^{m-2}}$ from this linear relation. 

\hfill

\noindent Similarly, consider equation basis $afhcdh(ceg)^{m-2}$ with comparing basis $(aek)^{m-2}bdk$. The derived linear relation includes $(afh)^2(ceg)^{m-2}$, $afhbfgcdh(ceg)^{m-3}$, \\$afhcdh(ceg)^{m-2}$, $bfg(cdh)^2(ceg)^{m-3}$, and $(cdh)^2(ceg)^{m-2}$. Thus, we can solve the Haar state of ${afhafh(ceg)^{m-2}}$ from this linear relation. 

\hfill

\noindent Next, by Equation~(\ref{apeq:7}) and Equation~(\ref{apeq:8}) in Appendix~\ref{apd:c}, we can compute the Haar state of ${afhbdk(ceg)^{m-2}}$ and ${bdkafh(ceg)^{m-2}}$. 

\hfill

\noindent Then, using the equality $bdk(ceg)^{m-2}=D_q*bdk(ceg)^{m-2}$, we can solve the Haar state of ${aekbdk(ceg)^{m-2}}$. Replacing $bdk(ceg)^{m-2}$ by $afh(ceg)^{m-2}$ in the above equation, we can find the Haar state of ${aekafh(ceg)^{m-2}}$. Finally, using the equality $aek(ceg)^{m-2}=aek*D_q*(ceg)^{m-2}$, we can solve the Haar state of ${(aek)^2(ceg)^{m-2}}$.

\hfill

\noindent At this point, we have computed all the Haar states of standard monomials ending with $(ceg)^{m-2}$ or ending with $bfgbfg(ceg)^{m-3}$, $cdhcdh(ceg)^{m-3}$, and $bfgcdh(ceg)^{m-3}$. Also, we computed the Haar state of standard monomials  $(bdk)^r(bfg)^{s}(cdh)^j(ceg)^{m-i-j}$ and $(afh)^r(cdh)^{s}(bfg)^j(ceg)^{m-i-j}$ with $r+s=1$. In the next subsection, we assume that the Haar state of standard monomials ending with $(ceg)^{m-i}$ or ending with $bfgbfg(ceg)^{m-i-1}$, $cdhcdh(ceg)^{m-i-1}$, and $bfgcdh(ceg)^{m-i-1}$ are known. We will also assume that the Haar state of standard monomials  $(bdk)^r(bfg)^{s}(cdh)^j(ceg)^{m-i-j}$ and $(afh)^r(cdh)^{s}(bfg)^j(ceg)^{m-i-j}$ with $r+s\le i-1$ are known. Based on this assumption, we will compute the Haar state of standard monomials ending with $(ceg)^{m-i-1}$ or ending with $bfgbfg(ceg)^{m-i-2}$, $cdhcdh(ceg)^{m-i-2}$, and $bfgcdh(ceg)^{m-i-2}$. We will also compute the Haar state of  standard monomials  $(bdk)^r(bfg)^{s}(cdh)^j(ceg)^{m-i-j}$ and $(afh)^r(cdh)^{s}(bfg)^j(ceg)^{m-i-j}$ with $r+s=i$.

\subsection{Standard monomials ending with $\boldsymbol{(ceg)^{m-i-1}}$ and standard monomials ending with$\boldsymbol{bfgbfg(ceg)^{m-i-2}}$, $\boldsymbol{cdhcdh(ceg)^{m-i-2}}$, and $\boldsymbol{bfgcdh(ceg)^{m-i-2}}$}

\hfill 

We can apply an inductive approach to compute the pursuing Haar states. 

\subsubsection{Monomials in form $(bdk)^r(bfg)^{s}(cdh)^j(ceg)^{m-i-j}$ and\\ $(afh)^r(cdh)^{s}(bfg)^j(ceg)^{m-i-j}$ with $r+s=i$ and $0\le j \le m-i$}

\hfill \\

First, notice that by our assumption, we already know the Haar states of \\ $(bdk)^r(bfg)^{s}(cdh)^j(ceg)^{m-i-j}$ and $(afh)^r(cdh)^{s}(bfg)^j(ceg)^{m-i-j}$ with $s\ge 1$ and $j=1$ since these monomials end with $bfgcdh(ceg)^{m-i-1}$. 

\hfill

\noindent To compute the Haar state of $(bdk)^icdh(ceg)^{m-i-1}$, we use equation basis $(bdk)^i(ceg)^{m-i}$ with comparing basis $(aek)^{m-1}afh$. Using the Theorem 1 e), we know that the derived linear relation only contains standard monomials in the form\\ $(bdk)^r(bfg)^s(cdh)^j(ceg)^{m-j-r-s}$ since no generator $a$ can appear in the newly generated monomials. Thus, the only monomial with unknown Haar state appearing in the linear relation is $(bdk)^icdh(ceg)^{m-i-1}$ and we can compute its Haar state. This finish the case $j=1$ for $(bdk)^r(bfg)^{s}(cdh)^j(ceg)^{m-i-j}$. Similarly, we can compute the Haar state of $(afh)^ibfg(ceg)^{m-i-1}$ using the linear relation derived from equation basis $(afh)^i(ceg)^{m-i}$ with comparing basis $(aek)^{m-1}bdk$.

\hfill

\noindent Now assume we know the Haar state of $(bdk)^r(bfg)^s(cdh)^j(ceg)^{m-i-j}$ for all $j\le t-1$. To compute the Haar state of case $j=t$, we use equation basis\\ $(bdk)^r(bfg)^s(cdh)^{t-1}(ceg)^{m-i-t+1}$ and comparing basis $(aek)^{m-1}afh$. Here, we have to compute the case $r=0$ first, then the case $r=1$, case $r=2$, until the case $r=i$. This is an analog to the zigzag recursive relation. To solve $(afh)^r(cdh)^{s}(bfg)^j(ceg)^{m-i-j}$, we use equation basis $(afh)^r(cdh)^{s}(bfg)^{j-1}(ceg)^{m-i-j+1}$ and comparing basis $(aek)^{m-1}bdk$ and use the same strategy as for monomials $(bdk)^r(bfg)^s(cdh)^j(ceg)^{m-i-j}$.

\subsubsection{Monomials with one high-complexity segment ending with $cdhcdh(ceg)^{m-i-2}$, $bfgbfg(ceg)^{m-i-2}$, and $bfgcdh(ceg)^{m-i-2}$}

\hfill \\

The monomials we are considering are in the form: $$aek(cdh)^{i+1}(ceg)^{m-i-2}, afh(cdh)^{i+1}(ceg)^{m-i-2}, bdk(cdh)^{i+1}(ceg)^{m-i-2};$$ and $$aek(bfg)^{i+1}(ceg)^{m-i-2}, afh(bfg)^{i+1}(ceg)^{m-i-2}, bdk(bfg)^{i+1}(ceg)^{m-i-2};$$ and
\begin{equation*}
    \begin{split}
        &aek(bfg)^{r+1}(cdh)^{s+1}(ceg)^{m-i-2}, afh(bfg)^{r+1}(cdh)^{s+1}(ceg)^{m-i-2}, \\
        &bdk(bfg)^{r+1}(cdh)^{s+1}(ceg)^{m-i-2}.
    \end{split}
\end{equation*}
We start with monomials ending with $cdhcdh(ceg)^{m-i-2}$. From subsection 4.2, we already know the Haar state of $bdk(cdh)^{i+1}(ceg)^{m-i-2}$ by the zigzag recursive relation. To compute the Haar state of $aek(cdh)^{i+1}(ceg)^{m-i-2}$ and $afh(cdh)^{i+1}(ceg)^{m-i-2}$, we build a $2\times 2$ linear system consisting of:
\begin{itemize}
    \item[1)] $D_q*(cdh)^{i+1}(ceg)^{m-i-2}=(cdh)^{i+1}(ceg)^{m-i-2}$;
    \item[2)] the linear relation derived from equation basis $afh(cdh)^{i}(ceg)^{m-i-1}$ and comparing basis $(aek)^{m-1}afh$.
\end{itemize}
Similarly, the Haar state of $afh(bfg)^{i+1}(ceg)^{m-i-2}$ is known from subsection 4.2. To compute the Haar state of $aek(bfg)^{i+1}(ceg)^{m-i-2}$ and $bdk(bfg)^{i+1}(ceg)^{m-i-2}$, we build a $2\times 2$ linear system consisting of:
\begin{itemize}
    \item[1)] $D_q*(bfg)^{i+1}(ceg)^{m-i-2}=(bfg)^{i+1}(ceg)^{m-i-2}$;
    \item[2)] the linear relation derived from equation basis $bdk(bfg)^{i}(ceg)^{m-i-1}$ and comparing basis $(aek)^{m-1}bdk$.
\end{itemize}

\noindent Then, we compute the Haar state of $aek(bfg)^{r+1}(cdh)^{s+1}(ceg)^{m-i-2}$, \\
$bdk(bfg)^{r+1}(cdh)^{s+1}(ceg)^{m-i-2}$, and $afh(bfg)^{r+1}(cdh)^{s+1}(ceg)^{m-i-2}$. By our assumption, we have solved the Haar state of $bdk(bfg)^{i-1-l}(cdh)^{2+l}(ceg)^{m-i-2}$,\\ $afh(cdh)^{i-1-l}(bfg)^{2+l}(ceg)^{m-i-2}$, $(bfg)^{i-l}(cdh)^{2+l}(ceg)^{m-i-2}$,\\ $(cdh)^{i-l}(bfg)^{2+l}(ceg)^{m-i-2}$, and $ceg(cdh)^{i-1-l}(bfg)^{2+l}(ceg)^{m-i-2}$ for $0\le l\le i-1$. Thus, to compute the Haar state of $aek(bfg)^{i-1-l}(cdh)^{2+l}(ceg)^{m-i-2}$, $0\le l\le i-1$, we use equation $D_q*(bfg)^{i-1-l}(cdh)^{2+l}(ceg)^{m-i-2}=(bfg)^{i-1-l}(cdh)^{2+l}(ceg)^{m-i-2}$.

\hfill

\noindent To compute the case $l=i$, notice that by our assumption the Haar states of $afh(bfg)^icdh(ceg)^{m-i-2}$ and $bdk(cdh)^ibfg(ceg)^{m-i-2}$ are known by our assumption. To construct a linear system of $aek(bfg)^icdh(ceg)^{m-i-2}$ and\\ $bdk(bfg)^icdh(ceg)^{m-i-2}$, we use linear relation derived from:
\begin{itemize}
    \item[1)] Equation $D_q*(bfg)^icdh(ceg)^{m-i-2}=(bfg)^icdh(ceg)^{m-i-2}$.
    \item[2)] Equation basis $bdk(bfg)^{i-1}cdh(ceg)^{m-i-1}$ and comparing basis $(aek)^{m-1}bdk$.
\end{itemize}
To construct a linear system of $aek(cdh)^ibfg(ceg)^{m-i-2}$ and $afh(cdh)^ibfg(ceg)^{m-i-2}$, we use linear relation derived from
\begin{itemize}
    \item[1)] Equation $D_q*(cdh)^ibfg(ceg)^{m-i-2}=(cdh)^ibfg(ceg)^{m-i-2}$.
    \item[2)] Equation basis $afh(cdh)^{i-1}bfg(ceg)^{m-i-1}$ and comparing basis $(aek)^{m-1}afh$.
\end{itemize}

\subsubsection{Monomials with two high-complexity segments ending with $(ceg)^{m-i-1}$}

\hfill\\

\noindent Finally, we compute the Haar states of monomials with two high-complexity segments ending with $(ceg)^{m-i-1}$. We start with monomials in form\\ $afhbdk(bfg)^r(cdh)^s(ceg)^{m-i-1}$ and $bdkafh(bfg)^r(cdh)^s(ceg)^{m-i-1}$. Notice that $afhbdkceg$ and $bdkafhceg$ can be written as a linear combination of $aekbfgcdh$ and other monomials with at most one high-complexity segment. Thus,\\ $afhbdk(bfg)^r(cdh)^s(ceg)^{m-i-1}$ and $bdkafh(bfg)^r(cdh)^s(ceg)^{m-i-1}$ can be written as a linear combination of $aek(bfg)^{r+1}(cdh)^{s+1}(ceg)^{m-i-2}$ and other monomials with at most one high-complexity segment. Thus, we can compute the Haar state of $afhbdk(bfg)^r(cdh)^s(ceg)^{m-i-1}$ and $bdkafh(bfg)^r(cdh)^s(ceg)^{m-i-1}$ using the Haar states we known. To compute the Haar state of $(bdk)^2(bfg)^r(cdh)^s(ceg)^{m-i-1}$, we use equation basis $bdk(bfg)^{r+1}(cdh)^s(ceg)^{m-i-1}$ and comparing basis $(aek)^{m-1}afh$. To compute the Haar state of $(afh)^2(bfg)^r(cdh)^s(ceg)^{m-i-1}$, we use equation basis $afh(bfg)^{r}(cdh)^{s+1}(ceg)^{m-i-1}$ and comparing basis $(aek)^{m-1}bdk$. At last, to compute the Haar state of $aekafh(bfg)^r(cdh)^s(ceg)^{m-i-1}$, we use the equation $D_q*afh(bfg)^r(cdh)^s(ceg)^{m-i-1}=afh(bfg)^r(cdh)^s(ceg)^{m-i-1}$. To compute the Haar state of $aekbdk(bfg)^r(cdh)^s(ceg)^{m-i-1}$, we use the equation $D_q*bdk(bfg)^r(cdh)^s(ceg)^{m-i-1}=bdk(bfg)^r(cdh)^s(ceg)^{m-i-1}$. To compute the Haar state of $aekaek(bfg)^r(cdh)^s(ceg)^{m-i-1}$, we use the equation\\ $aek*D_q*(bfg)^r(cdh)^s(ceg)^{m-i-1}=aek(bfg)^r(cdh)^s(ceg)^{m-i-1}$.

\hfill

\noindent At this point, we have solved the Haar state of all monomials with at most two high-complexity segments ending with $(ceg)^{m-i-1}$ and monomials with at most one high-complexity segment ending with $bfgbfg(ceg)^{m-i-2}$, $cdhcdh(ceg)^{m-i-2}$, and $bfgcdh(ceg)^{m-i-2}$.

\hfill

\noindent Starting from the next sub-section, we assume that the Haar states of monomials with at most $w\le i$ high-complexity segments ending with $(ceg)^{m-i-1}$ and monomials with at most $w-1$ high-complexity segments ending with $bfgbfg(ceg)^{m-i-2}$, $cdhcdh(ceg)^{m-i-2}$, and $bfgcdh(ceg)^{m-i-2}$ are known. Now, we compute the Haar states of monomials with $w+1$ high-complexity segments ending with $(ceg)^{m-i-1}$ and monomials with  $w$ high-complexity segments ending with $bfgbfg(ceg)^{m-i-2}$, $cdhcdh(ceg)^{m-i-2}$, and  $bfgcdh(ceg)^{m-i-2}$.

\subsubsection{Monomials with $w$ high-complexity segments ending with $bfgbfg(ceg)^{m-i-2}$, and $cdhcdh(ceg)^{m-i-2}$}

\hfill \\

We start with monomials ending with $cdhcdh(ceg)^{m-i-2}$. By subsection 4.4.2, we know the Haar state of standard monomials $(bdk)^r(bfg)^s(cdh)^{i-r-s+2}(ceg)^{m-i-2}$ with $r+s\le i$. Thus, we know the Haar state of $(bdk)^w(cdh)^{i-w+2}(ceg)^{m-i-2}$. Then, we construct a $2\times 2$ linear system of $aek(bdk)^{w-1}(cdh)^{i-w+2}(ceg)^{m-i-2}$ and $afh(bdk)^{w-1}(cdh)^{i-w+2}(ceg)^{m-i-2}$ consisting of:
\begin{itemize}
    \item[1)] $D_q*(bdk)^{w-1}(cdh)^{i-w+2}(ceg)^{m-i-2}=(bdk)^{w-1}(cdh)^{i-w+2}(ceg)^{m-i-2}$;
    \item[2)] linear relation derived from equation basis $afh(bdk)^{w-1}(cdh)^{i+1-w}(ceg)^{m-i-1}$ and comparing basis $(aek)^{m-1}afh$.
\end{itemize}
Next, we compute the Haar state of monomials in the form \\$aek(afh)^{j-1}(bdk)^{w-j}(cdh)^{i+2-w}(ceg)^{m-i-2}$ and $(afh)^j(bdk)^{w-j}(cdh)^{i+2-w}(ceg)^{m-i-2}$. We have solve the case $j=1$. Now, assume that we have solved all $j\le t-1$. When $j=1$, we have the following equation:
\begin{equation*}
    \begin{split}
        &(afh)^{t-1}(bdk)^{w-t}(cdh)^{i+2-w}(ceg)^{m-i-2}\\
        &=D_q*(afh)^{t-1}(bdk)^{w-t}(cdh)^{i+2-w}(ceg)^{m-i-2}\\
        &=aek(afh)^{t-1}(bdk)^{w-t}(cdh)^{i+2-w}(ceg)^{m-i-2}\\
        &-q*(afh)^t(bdk)^{w-t}(cdh)^{i+2-w}(ceg)^{m-i-2}\\
        &-q*bdk(afh)^{t-1}(bdk)^{w-t}(cdh)^{i+2-w}(ceg)^{m-i-2}\\
        &+q^2*bfg(afh)^{t-1}(bdk)^{w-t}(cdh)^{i+2-w}(ceg)^{m-i-2}\\
        &+q^2*cdh(afh)^{t-1}(bdk)^{w-t}(cdh)^{i+2-w}(ceg)^{m-i-2}\\
        &-q^3*ceg(afh)^{t-1}(bdk)^{w-t}(cdh)^{i+2-w}(ceg)^{m-i-2}
    \end{split}
\end{equation*}
By the Theorem 1 e), $bdk(afh)^{t-1}(bdk)^{w-t}(cdh)^{i+2-w}(ceg)^{m-i-2}$ can be written as a linear combination of $(afh)^{t-1}(bdk)^{w-t+1}(cdh)^{i+2-w}(ceg)^{m-i-2}$ and other standard monomials with at most $w-1$ high-complexity segments ending with $(ceg)^{m-i-2}$. Thus, the Haar state of $bdk(afh)^{t-1}(bdk)^{w-t}(cdh)^{i+2-w}(ceg)^{m-i-2}$ is known. Similarly, the Haar states of $bfg(afh)^{t-1}(bdk)^{w-t}(cdh)^{i+2-w}(ceg)^{m-i-2}$, $cdh(afh)^{t-1}(bdk)^{w-t}(cdh)^{i+2-w}(ceg)^{m-i-2}$, and\\ $ceg(afh)^{t-1}(bdk)^{w-t}(cdh)^{i+2-w}(ceg)^{m-i-2}$ are known. So the above equation is a linear relation between $aek(afh)^{t-1}(bdk)^{w-t}(cdh)^{i+2-w}(ceg)^{m-i-2}$ and \\$(afh)^t(bdk)^{w-t}(cdh)^{i+2-w}(ceg)^{m-i-2}$. For the other linear relation between the two monomials, we use the linear relation derived from equation basis \\$(afh)^t(bdk)^{w-t}(cdh)^{i+1-w}(ceg)^{m-i-1}$ and comparing basis $(aek)^{m-1}afh$. 

\hfill

\noindent Finally, we compute the Haar states of monomials in the form \\$(aek)^n(afh)^{j}(bdk)^{w-j-n}(cdh)^{i+2-w}(ceg)^{m-i-2}$. From the previous paragraph, we have solve the case $n=1$. Assume that we know the Haar state of all $n\le t-1$. to compute the case $n=t$, we use the quantum determinant condition 
\begin{equation*}
    \begin{split}
        &(aek)^{t-1}*D_q*(afh)^{j}(bdk)^{w-j-t}(cdh)^{i+2-w}(ceg)^{m-i-2}\\
        &=(aek)^{t-1}(afh)^{j}(bdk)^{w-j-t}(cdh)^{i+2-w}(ceg)^{m-i-2}, 
    \end{split}
\end{equation*}
in which the only monomial with unknown Haar state is \\$(aek)^t(afh)^{j}(bdk)^{w-j-t}(cdh)^{i+2-w}(ceg)^{m-i-2}$.

\hfill

\noindent The Haar state of monomials with $w$ high-complexity segments ending with\\ $bfgbfg(ceg)^{m-i-2}$ can be computed by an approach similar to the case of \\$cdhcdh(ceg)^{m-i-2}$ with every $afh$ segment replaced by $bdk$ and every $cdh$ segment replaced by $bfg$.

\subsubsection{Monomials in form $(afh)^w(cdh)^{j+1}(bfg)^{i-w-j+1}(ceg)^{m-i-2}$,\\ $bdk(afh)^{w-1}(cdh)^{j+1}(bfg)^{i-w-j+1}(ceg)^{m-i-2}$, and \\$aek(afh)^{w-1}(cdh)^{j+1}(bfg)^{i-w-j+1}(ceg)^{m-i-2}$ with $0\le j\le i-w$}

\hfill \\

From the previous sub-section, we know the Haar states of\\ $(afh)^w(cdh)^{j+1}(bfg)^{i-w-j+1}(ceg)^{m-i-2}$ for all $0\le j\le i-w-1$. Thus, when $0\le j\le i-w-1$, we only need to focus on $bdk(afh)^{w-1}(cdh)^j(bfg)^{i-w-j+2}(ceg)^{m-i-2}$ and $aek(afh)^{w-1}(cdh)^j(bfg)^{i-w-j+2}(ceg)^{m-i-2}$. To construct a linear system containing the two monomials, we use linear relation derived from 
\begin{itemize}
    \item[1)] $D_q*(afh)^{w-1}(cdh)^j(bfg)^{i-w-j+2}(ceg)^{m-i-2} \\=(afh)^{w-1}(cdh)^j(bfg)^{i-w-j+2}(ceg)^{m-i-2}$.
    \item[2)] Equation basis $(afh)^w(cdh)^{j}(bfg)^{i-w-j+1}(ceg)^{m-i-1}$ and comparing basis $(aek)^{m-1}bdk$.
\end{itemize}
When $j=i+1-w$, we need to solve the Haar state of $(afh)^w(cdh)^{i+1-w}bfg(ceg)^{m-i-2}$, $bdk(afh)^{w-1}(cdh)^{i+1-w}bfg(ceg)^{m-i-2}$, and $aek(afh)^{w-1}(cdh)^{i+1-w}bfg(ceg)^{m-i-2}$ at the same time. To construct a linear system containing the three monomials, we use linear relation derived from
\begin{itemize}
    \item[1)] $D_q*(afh)^{w-1}(cdh)^{i+1-w}bfg(ceg)^{m-i-2}=(afh)^{w-1}(cdh)^{i+1-w}bfg(ceg)^{m-i-2}$.
    \item[2)] Equation basis $(afh)^w(cdh)^{i-w}bfg(ceg)^{m-i-1}$ and comparing basis $(aek)^{m-1}afh$.
    \item[3)] Equation basis $bdk(afh)^{w-1}(cdh)^{i-w+1}(ceg)^{m-i-1}$ and comparing\\ basis $(aek)^{m-1}bdk$.
\end{itemize}

\subsubsection{Monomials in form $(aek)^n(bdk)^{j-n}(afh)^{w-j}(cdh)^{s+1}(bfg)^{r+1}(ceg)^{m-i-2}$ with $n \le j\le w$}

\hfill \\

To start, we compute the Haar states of monomials in form\\ $(bdk)^{j}(afh)^{w-j}(cdh)^{s+1}(bfg)^{r+1}(ceg)^{m-i-2}$ and \\
$aek(bdk)^{j-1}(afh)^{w-j}(cdh)^{s+1}(bfg)^{r+1}(ceg)^{m-i-2}$, with $1\le j\le w$ and $s+r=i-w$ at the same time. We already solve the case $j=1$. Without loss of generality, we assume that the Haar states of all $1\le j\le t-1\le w-1$ are known. To solve the case $j=t$, firstly, notice that we have the following equation: 
\begin{equation*}
    \begin{split}
        &(bdk)^{t-1}(afh)^{w-t}(cdh)^{s+1}(bfg)^{r+1}(ceg)^{m-i-2}  \\
        &=D_q*(bdk)^{t-1}(afh)^{w-t}(cdh)^{s+1}(bfg)^{r+1}(ceg)^{m-i-2} \\
        &=aek(bdk)^{t-1}(afh)^{w-t}(cdh)^{s+1}(bfg)^{r+1}(ceg)^{m-i-2}  \\
        &-q*(bdk)^{t}(afh)^{w-t}(cdh)^{s+1}(bfg)^{r+1}(ceg)^{m-i-2} \\
        &-q*afh(bdk)^{t-1}(afh)^{w-t}(cdh)^{s+1}(bfg)^{r+1}(ceg)^{m-i-2}    \\
        &+q^2*bfg(bdk)^{t-1}(afh)^{w-t}(cdh)^{s+1}(bfg)^{r+1}(ceg)^{m-i-2}  \\
        &+q^2*cdh(bdk)^{t-1}(afh)^{w-t}(cdh)^{s+1}(bfg)^{r+1}(ceg)^{m-i-2}  \\
        &-q^3*ceg(bdk)^{t-1}(afh)^{w-t}(cdh)^{s+1}(bfg)^{r+1}(ceg)^{m-i-2}.
    \end{split}
\end{equation*}
Here, $afh(bdk)^{t-1}(afh)^{w-t}(cdh)^{s+1}(bfg)^{r+1}(ceg)^{m-i-2}$ can be written as a linear combination of $(bdk)^{t-1}(afh)^{w-t+1}(cdh)^{s+1}(bfg)^{r+1}(ceg)^{m-i-2}$ and other monomials with less number of high-complexity segments. Thus, besides the Haar states of $aek(bdk)^{t-1}(afh)^{w-t}(cdh)^{s+1}(bfg)^{r+1}(ceg)^{m-i-2}$ and \\ $(bdk)^{t}(afh)^{w-t}(cdh)^{s+1}(bfg)^{r+1}(ceg)^{m-i-2}$, the Haar state of other monomials re known. So, in case $j=t$, we only need one more linear relation to construct a system of the two monomials. We will use the linear relation derived from equation basis $(bdk)^{j-1}(afh)^{w-j+1}(cdh)^s(bfg)^{r+1}(ceg)^{m-i-1}$ and comparing basis $(aek)^{m-1}afh$.

\hfill

\noindent Then, we compute the Haar state of monomials in the form \\$(aek)^n(bdk)^{j-n}(afh)^{w-j}(cdh)^{s+1}(bfg)^{r+1}(ceg)^{m-i-2}$ with $n \le j\le w$. We already compute the case of $n=1$. After solving the case of $n=t-1$, we can compute the case $n=t$ by the equation $(aek)^{t-1}(bdk)^{j-t}(afh)^{w-j}(cdh)^{s+1}(bfg)^{r+1}(ceg)^{m-i-2}=(aek)^{t-1}*D_q*(bdk)^{j-t}(afh)^{w-j}(cdh)^{s+1}(bfg)^{r+1}(ceg)^{m-i-2}$for all $t\le j\le w$.

\subsubsection{Monomials with $w+1$ high-complexity segments ending with $(ceg)^{m-i-1}$}

\hfill \\

We start with monomials in form $(aek)^n(bdk)^{j}(afh)^{w-j-n+1}(cdh)^s(bfg)^r(ceg)^{m-i-1}$ with $1\le j\le w$ and $0\le n\le w-j$. Monomials in this form contain at least one $afh$ segment and one $bdk$ segment. We can write the monomial as a linear combination of $(aek)^n(bdk)^{j-1}(afh)^{w-j-n}[afhbdk](cdh)^s(bfg)^r(ceg)^{m-i-1}$ and other monomials with less number of high-complexity segments. To compute the Haar state of $(aek)^n(bdk)^{j-1}(afh)^{w-j-n}[afhbdk](cdh)^s(bfg)^r(ceg)^{m-i-1}$, we can apply Equation~(\ref{apeq:7}) and Equation~(\ref{apeq:8}) in Appendix~\ref{apd:c} to rewrite it as a linear combination of monomials with known Haar states. 

\hfill

\noindent Next, we compute the Haar state of monomials in form $(afh)^{w+1}(cdh)^s(bfg)^r(ceg)^{m-i-1}$ and $(bdk)^{w+1}(cdh)^s(bfg)^r(ceg)^{m-i-1}$. To compute $(afh)^{w+1}(cdh)^s(bfg)^r(ceg)^{m-i-1}$, we use equation basis $(afh)^{w}(cdh)^{s+1}(bfg)^r(ceg)^{m-i-1}$ and comparing basis\\ $(aek)^{m-1}bdk$. To compute $(bdk)^{w+1}(cdh)^s(bfg)^r(ceg)^{m-i-1}$, we use equation basis $(bdk)^{w}(cdh)^s(bfg)^{r+1}(ceg)^{m-i-1}$ and comparing basis $(aek)^{m-1}afh$.

\hfill

\noindent Finally, we consider monomials in the form $(aek)^j(afh)^{w-j+1}(cdh)^s(bfg)^r(ceg)^{m-i-1}$ and $(aek)^j(bdk)^{w-j+1}(cdh)^s(bfg)^r(ceg)^{m-i-1}$ with $1\le j\le w$. To start, we compute the Haar state of $aek(afh)^{w}(cdh)^s(bfg)^r(ceg)^{m-i-1}$ using the equation $D_q*(afh)^{w}(cdh)^s(bfg)^r(ceg)^{m-i-1}=(afh)^{w}(cdh)^s(bfg)^r(ceg)^{m-i-1}$. If we have computed the Haar state for all $j\le n$, to compute the case $j=n+1$, we use the following equation $(aek)^{n}(afh)^{w-n}(cdh)^s(bfg)^r(ceg)^{m-i-1}=(aek)^{n}*D_q*(afh)^{w-n}(cdh)^s(bfg)^r(ceg)^{m-i-1}$. We can compute the Haar state of\\ $(aek)^j(bdk)^{w-j+1}(cdh)^s(bfg)^r(ceg)^{m-i-1}$ in the same way. When $j=w+1$, we use the equation $(aek)^{w}(cdh)^s(bfg)^r(ceg)^{m-i-1}=(aek)^{w}*D_q*(cdh)^s(bfg)^r(ceg)^{m-i-1}$ to compute the Haar state of $(aek)^{w+1}(cdh)^s(bfg)^r(ceg)^{m-i-1}$.

\hfill

\noindent At this point, we have solved the Haar state of all monomials with at most $w+1$ high-complexity segments ending with $(ceg)^{m-i-1}$ and monomials with at most $w$ high-complexity segments ending with $bfgbfg(ceg)^{m-i-2}$, $cdhcdh(ceg)^{m-i-2}$, and $bfgcdh(ceg)^{m-i-2}$. Using an induction argument, we can compute the Haar state of all monomials with at most $i+1$ high-complexity segments ending with $(ceg)^{m-i-1}$ and monomials with at most $i$ high-complexity segments ending with $bfgbfg(ceg)^{m-i-2}$, $cdhcdh(ceg)^{m-i-2}$, and $bfgcdh(ceg)^{m-i-2}$. Thus, this subsection shows that we can use induction on the value of $i$ from $1$ until $m-2$ and compute the Haar states of all monomials ending with $ceg$, $bfgbfg$, $cdhcdh$ or $bfgcdh$.  
\subsection{Monomials with at most one low-complexity segment}
\subsubsection{Monomials ending with $cdh$ or $bfg$}

\hfill \\

Here, we only show the procedure to compute monomials ending with $cdh$. The case of $bfg$ is solved similarly.

\hfill

\noindent We start with monomials in form $aek(bdk)^{i}(cdh)^{m-1-i}$ and $afh(bdk)^{i}(cdh)^{m-1-i}$ with $m-1-i\ge 1$. We have to solve the Haar states of the two monomials at the same time. The first linear relation comes from equation $D_q*(bdk)^{i}(cdh)^{m-i-1}=(bdk)^{i}(cdh)^{m-i-1}$.  The second linear relation is derived from equation basis \\$afh(bdk)^{i}(cdh)^{m-i-2}ceg$ and comparing basis $(aek)^{m-1}afh$.

\hfill

\noindent Next, we consider monomials in the form $aek(afh)^j(bdk)^{n-1}(cdh)^{m-n-j}$ and\\ $(afh)^{j+1}(bdk)^{n-1}(cdh)^{m-n-j}$ with $m-n-j\ge 1$, $j\ge 1$. We solve the case $j=0$ in the last paragraph. If we have known the Haar states of all $j\le t-1$, we can solve the case $j=t$ by linear relations derived from
\begin{itemize}
    \item[1)] $(afh)^{t}(bdk)^{n-1}(cdh)^{m-n-t}=D_q*(afh)^{t}(bdk)^{n-1}(cdh)^{m-n-t}$.
    \item[2)]  Equation basis $(afh)^{t+1}(bdk)^{n-1}(cdh)^{m-n-t-1}ceg$ and comparing\\ basis $(aek)^{m-1}afh$.
\end{itemize}
\noindent Finally, we compute the Haar states of monomials in the form $(aek)^n(afh)^j(bdk)^i(cdh)^{r}$ with $n\ge 2$, $r\ge 1$, and $i+j+n+r=m$. The case $n=1$ is solved in the last paragraph. Assume that we have solved the Haar state for all $j\le t-1$. To solve the case $n=t$, we use equation $(aek)^t(afh)^{j-1}(bdk)^n(cdh)^{r}=(aek)^t*D_q*(afh)^{j-1}(bdk)^n(cdh)^{r}$ where $j+n+r=m-t$ and $j,r\ge 1$.

\hfill

\noindent At this point, we have solved the Haar states of all monomials ending with $cdh$.

\subsubsection{Monomials without low-complexity segment}

\hfill \\

Now, we are able to solve the Haar states of all monomials with at least one low-complexity segment since the number of generators $c$ and $g$ cannot decrease. We start with monomial in form $(afh)^{m-i}(bdk)^i$. When $i\ge 2$, we use equation basis $(afh)^{m-i}(bdk)^{i-1}bfg$ and comparing basis $(aek)^{m-1}afh$. When $i=1$, we use equation basis $(afh)^{m-1}cdh$ and comparing basis $(aek)^{m-1}bdk$. Finally, we compute monomials in form $(aek)^n(afh)^r(bfg)^s$. To compute the case $n=1$, we use equation $D_q*(afh)^r(bfg)^s=(afh)^r(bfg)^s$ with $r+s=m-1$. Now, we assume that the Haar state of monomials in the form $(aek)^{n-1}(afh)^r(bdk)^s$ with $r+s=m-n+1$ are known. To compute $(aek)^{n}(afh)^{r'}(bdk)^{s'}$ with $r'+s'=m-n$, we use equation $(aek)^{n-1}(afh)^{r'}(bdk)^{s'}=(aek)^{n-1}*D_q*(afh)^{r'}(bdk)^{s'}$.

\hfill

\noindent At this point, we have computed the Haar states of all monomials of order $m$.

\section*{Acknowledgement}

\noindent
This author is advised by Professor Jeffrey Kuan from Texas A\&M University and Professor Micheal Brannan from University of Waterloo. This work is partially funded by National Science Foundation (NSF grant DMS-2000331).

\bibliographystyle{plain}
\bibliography{reference}

\appendix

\section{Useful equations} \label{apd:c}
When we switch the order of high complexity segments\\ with $ceg$ segment:
\begin{equation}
\begin{split}
    cegaek &= aekceg+(q^3 - q)*afhceg-(q - 1/q)*bdkceg-(q^2 - 1)^2/q*bfgcdh,\\
    cegafh &= q^2*afhceg+(1 - q^2)*bfgcdh,\\
    cegbdk &= q^{-2}*bdkceg+(1 - q^{-2})*bfgcdh,
    \label{apeq:1}
\end{split}
\end{equation}
with $cdh$ segment:
\begin{equation}
    \begin{split}
        cdhaek &= aekcdh+(q^4 - q^2)*afhceg+(1 - q^2)*bdkceg-(q^2 - 1)^2*bfgcdh,\\
        cdhafh &= afhcdh+(q^3 - q)*afhceg-(q^3 - q)*bfgcdh,\\
        cdhbdk &= bdkcdh-(q - 1/q)*bdkceg+(q - 1/q)*bfgcdh,
        \label{apeq:2}
    \end{split}
\end{equation}
with $bfg$ segment:
\begin{equation}
    \begin{split}
        bfgaek &= aekbfg+(q^4 - q^2)*afhceg+(1 - q^2)*bdkceg-(q^2 - 1)^2*bfgcdh,\\
        bfgafh &= afhbfg+(q^3 - q)*afhceg-(q^3 - q)*bfgcdh,\\
        bfgbdk &= bdkbfg-(q - 1/q)*bdkceg+(q - 1/q)*bfgcdh.
        \label{apeq:3}
    \end{split}
\end{equation}
The key observation is that when we switch the order of a high-complexity segment with a low-complexity segment, the newly generated monomials contain at most one high-complexity segment.

\hfill

\noindent When we switch the order of two high complexity segments:
\begin{equation}
    \begin{split}
        bdkafh =& q^{-2}*afhbdk+(1 - q^{-2})*aekbfg\\
        &+(1 - q^{-2})*aekcdh-(q^2 - 1)^2/q^3*aekceg\\
        &+\frac{(q^2 - 1)^2(q^2 + 1)}{q^2}*afhceg-(q^4 - q^2)*bfgcdh,
        \label{apeq:4}
    \end{split}
\end{equation}
\begin{equation}
    \begin{split}
        afhaek =& aekafh+(q - 1/q)*afhbdk-(q - 1/q)*aekbfg\\
        &-(q - 1/q)*aekcdh+(q - 1/q)^2*aekceg+(q - 1/q)*afhceg,\\
        \label{apeq:5}
    \end{split}
\end{equation}
\begin{equation}
    \begin{split}
        bdkaek =& aekbdk-(q - 1/q)*afhbdk+(q - 1/q)*aekbfg\\
        &+(q - 1/q)*aekcdh-(q - 1/q)^2*aekceg\\
        &+\frac{(q^2 - 1)^2(q^2 + 1)}{q}*afhceg-(q^3 - q)*bdkceg\\
        &-q(q^2 - 1)^2*bfgcdh.
        \label{apeq:6}
    \end{split}
\end{equation}
In Equation~(\ref{apeq:4}), the newly generated monomials contain at most one high-complexity segment. In Equation~(\ref{apeq:5}) and Equation~(\ref{apeq:6}), the newly generated monomials contain at most one high-complexity segment except the monomial $afhbdk$.

\hfill

\noindent Standard monomials $afhbdkceg$, $bdkafhceg$ and $aekbfgcdh$ have the same counting matrix:
\begin{equation*}
    \begin{bmatrix}
    1&1&1\\
    1&1&1\\
    1&1&1
    \end{bmatrix}.
\end{equation*}
We have the following equation:
\begin{equation}
    \begin{split}
        afhbdkceg =& q*aekbfgcdh+(1 - q^2)*aekbfgceg\\
        &+(1 - q^2)*aekcdhceg+(q^2 - 1)^2/q*aek(ceg)^2\\
        &+(1 - q^2)*afhbfgcdh+(q^3 - q)*afhbfgceg\\
        &+(q^3 - q)*afhcdhceg-(q^2 - 1)^2*afh(ceg)^2.
        \label{apeq:7}
    \end{split}
\end{equation}
\begin{equation}
    \begin{split}
        bdkafhceg =& 1/q*aekbfgcdh-(1 - q^{-2})*afhbfgcdh\\
        &+(q - q^{-1})*afhbfgceg+(q - q^{-1})*afhcdhceg\\
        &+(q^2 - 1)^2*afh(ceg)^2-(q^4 - q^2)*bfgcdh(ceg)^2.
        \label{apeq:8}
    \end{split}
\end{equation}
\newpage
\section{Illustration of the Zigzag Recursive Relation} \label{apd:d}

\tikzstyle{smallbox} = [rectangle,  minimum width=0.5cm, minimum height=0.5cm, text centered, draw=white, fill=white!100]
\tikzstyle{box} = [rectangle, minimum width=3.7cm, minimum height=1cm, text centered, draw=black, fill=white!100]
\tikzstyle{arrow} = [thick,->,>=stealth]
\tikzstyle{arrow_v} = [thick,->,>=stealth, double distance=1pt]

\begin{figure}[htb!]
\centering
\resizebox{0.75\textwidth}{!}{

\begin{tikzpicture}
\node (label1) [smallbox] {\rotatebox{180}{$\mathbf{\vdots}$}};
\node (label2) [smallbox, right of=label1, xshift= 5cm]   {\rotatebox{180}{$\mathbf{\vdots}$}};
\node (label3) [box, below of=label1, yshift= -0.8cm]      {$bfg(cdh)^{j-1}(ceg)^{n-j+1}$};
\node (label4) [box, right of=label3, xshift= 5cm]      {$bdk(cdh)^{j-1}(ceg)^{n-j+1}$};
\node (label5) [box, below of=label3, yshift= -0.8cm]      {$bfg(cdh)^{j}(ceg)^{n-j}$};
\node (label6) [box, right of=label5, xshift= 5cm]      {$bdk(cdh)^{j}(ceg)^{n-j}$};
\node (label7) [box, below of=label5, yshift= -0.8cm]      {$bfg(cdh)^{j+1}(ceg)^{n-j-1}$};
\node (label8) [box, right of=label7, xshift= 5cm]      {$bdk(cdh)^{j+1}(ceg)^{n-j-1}$};
\node (label9) [smallbox, below of=label7, yshift= -0.8cm]      {\rotatebox{0}{$\mathbf{\vdots}$}};
\node (label10) [smallbox, right of=label9, xshift= 5cm]      {\rotatebox{0}{$\mathbf{\vdots}$}};

\draw [arrow_v] (label1) -- node[anchor=east] {1} (label3);
\draw [arrow_v] (label3) -- node[anchor=east] {3} (label5);
\draw [arrow_v] (label5) -- node[anchor=east] {5} (label7);
\draw [arrow_v] (label7) -- node[anchor=east] {7} (label9);

\draw [arrow_v] (label2) -- node[anchor=west] {2} (label4);
\draw [arrow_v] (label4) -- node[anchor=west] {4} (label6);
\draw [arrow_v] (label6) -- node[anchor=west] {6} (label8);
\draw [arrow_v] (label8) -- node[anchor=west] {8} (label10);

\draw [arrow] (label3) -- node[anchor=south] {} (label4);
\draw [arrow] (label5) -- node[anchor=south] {} (label6);
\draw [arrow] (label7) -- node[anchor=south] {} (label8);

\draw [arrow] (label4) -- node[anchor=south, xshift=-0.1cm, yshift=-0.1cm] {\rotatebox{30}{}} (label5);
\draw [arrow] (label6) -- node[anchor=south, xshift=-0.1cm, yshift=-0.1cm] {\rotatebox{30}{}} (label7);

\end{tikzpicture}
}
\caption{Illustration of the Zigzag recursive relation. Arrow 1 to 8 means that we are using the monomial in the upper box as the equation basis and $(aek)^{m-1}afh$ as the comparing basis to derive the linear relation containing the monomial in the lower box as the only unknown. The zigzagging arrows indicate the order that we follow to solve the Haar states of these monomials. }
\label{fig:my_label}
\end{figure}
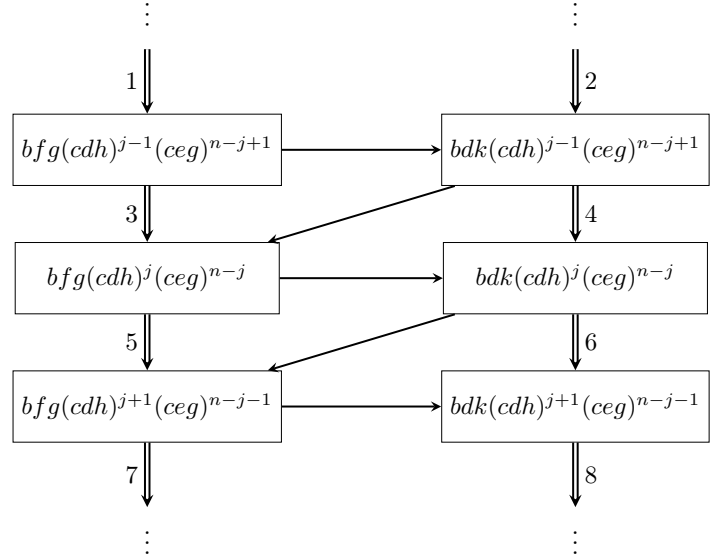
\newpage
\section{Example of $q$-Deformed Weingarten Function} \label{apd:e}
We know that when $q\rightarrow 1$, $\mathcal{O}(SU_q(n))$ becomes $SU(n)$ and the Haar state on $SU_q(n)$ becomes the Haar measure on $SU(n)$. This implies that 
\begin{equation*}
\begin{split}
    {h(x_{i_1j_1}\cdots x_{i_nj_n}x_{i_1'j_1'}^*\cdots x_{i_n'j_n'}^*)
    \xrightarrow{q\rightarrow 1}
    \int_{SU(n)}U_{i_1j_1}\cdots U_{i_nj_n}U_{i_1'j_1'}^*\cdots U_{i_n'j_n'}^*\ dU},
\end{split}
\end{equation*}
where $x_{i,j}$'s are generators of $\mathcal{O}(SU_q(n))$ and $U_{i,j}$'s are coordinate function on $SU(n)$. The Haar state on the quantum sphere serves as an example of $q$-deformed Weingarten function on $SU(n)$(for detail, see Noumi \textit{et al.}~\cite{noumi1993finite}, Reshetikhin \textit{et al.}~\cite{reshetikhin2001quantum}, Mikkelsen \textit{et al.}~\cite{mikkelsen2022haar}).

\hfill

\noindent One major difference between the Haar state and the integral is that the order of generators affects the Haar state. However, the order of the coordinate functions does not affect the integral. In the following examples on $\mathcal{O}(SU_q(3))$, we show that the order of generators in the Haar state does not affect the limit at $q=1$.

\hfill

\noindent \textbf{Example 1:}
\begin{equation*}
\begin{split}
h(x_{11}x_{22}x_{11}^*x_{22}^*)&=h(aea^*e^*)=h(ae(ek-q\cdot fh)(ak-q\cdot cg))\\
    &=h(aeekak)-q\cdot h(aefhak)-q\cdot h(aeekcg)+q^2\cdot h(aefhcg)\\
    &\frac{q^2}{(q^2+1)^2(q^4+1)}.
\end{split}
\end{equation*}
\begin{equation*}
    \begin{split}
h(x_{22}x_{11}x_{11}^*x_{22}^*)&=h(eaa^*e^*)=h(ea(ek-q\cdot fh)(ak-q\cdot cg))\\ 
    &=h(eaekak)-q\cdot h(eafhak)-q\cdot h(eaekcg)+q^2\cdot h(eafhcg)\\
    &=\frac{q^2}{(q^2+1)^2(q^4+1)}\frac{q^6+q^2+1}{q^4+q^2+1}
    \end{split}.
\end{equation*}
\begin{equation*}
    \begin{split}
        h(x_{11}x_{22}x_{22}^*x_{11}^*)&=h(aee^*a^*)=h(ae(ak-q\cdot cg)(ek-q\cdot fh))\\
        &=h(aeakek)-q\cdot h(aecgek)-q\cdot h(aeakfh)+q^2\cdot h(aecgfh)\\
        &=\frac{1}{(q^2+1)^2(q^4+1)}\frac{q^6+q^4+1}{q^4+q^2+1}.
    \end{split}
\end{equation*}
\begin{equation*}
    \begin{split}
        h(x_{11}x_{11}^*x_{22}x_{22}^*)&=h(aa^*ee^*)=h((aek-q\cdot afh)(eak-q\cdot ceg))\\
        &=h(aekeak)-q\cdot h(aekceg)-q\cdot h(afheak)+q^2\cdot h(afhceg)\\
       &=\frac{q^2}{(q^2+1)^2(q^4+1)}.
    \end{split}
\end{equation*}
The Haar states of monomials in other orders can be computed by the relation $h(y\phi(x))=h(xy)$ where $\phi$ is the homomorphism on $\mathcal{O}(SU_q(3))$ such that $\phi(x_{ij})=q^{2(i+j-4)}x_{ij}$. When $q\rightarrow 1$, all Haar state values goes to $1/8$ which is consistent with 
\begin{equation*}
    \int_{SU(3)}U_{11}U_{22}U_{11}^*U_{22}^*\ dU=Wg(1^2,3)=\frac{1}{3^2-1}=\frac{1}{8}.
\end{equation*}
\textbf{Example 2:}
\begin{equation*}
    \begin{split}
        h(x_{11}x_{32}x_{31}^*x_{12}^*)&=h(ahg^*b^*)=(-q)^{-1}\cdot h(ah(bf-q\cdot ce)(dk-q\cdot fg))\\
        &=-q^{-1}[h(ahbfdk)-q\cdot h(ahcedk)-q\cdot h(ahbffg)+q^2\cdot h(ahcefg)]\\
        &=\frac{-q}{(q^2+1)^2(q^4+1)(q^4+q^2+1)}.
    \end{split}
\end{equation*}
\begin{equation*}
    \begin{split}
        h(x_{32}x_{11}x_{31}^*x_{12}^*)&=h(hag^*b^*)=(-q)^{-1}\cdot h(ha(bf-q\cdot ce)(dk-q\cdot fg))\\
        &=-q^{-1}[h(habfdk)-q\cdot h(hacedk)-q\cdot h(habffg)+q^2\cdot h(hacefg)]\\
        &=\frac{-q^7}{(q^2+1)^2(q^4+1)(q^4+q^2+1)}.
    \end{split}
\end{equation*}
\begin{equation*}
    \begin{split}
        h(x_{11}x_{32}x_{12}^*x_{31}^*)&=h(ahb^*g^*)=(-q)^{-1}\cdot h(ah(dk-q\cdot fg)(bf-q\cdot ce))\\
        &=-q^{-1}[h(ahdkbf)-q\cdot h(ahdkce)-q\cdot h(ahfgbf)+q^2\cdot h(ahfgce)]\\
        &=\frac{-q}{(q^2+1)^2(q^4+1)(q^4+q^2+1)}.
    \end{split}
\end{equation*}
\begin{equation*}
    \begin{split}
        h(x_{11}x_{12}^*x_{32}x_{31}^*)&=h(ab^*hg^*)=(-q)^{-1}\cdot h(a(dk-q\cdot fg)h(bf-q\cdot ce))\\
        &=-q^{-1}[h(adkhbf)-q\cdot h(adkhce)-q\cdot h(afghbf)+q^2\cdot h(afghce)]\\
        &=\frac{-q^4}{(q^2+1)^2(q^4+1)(q^4+q^2+1)}.
    \end{split}
\end{equation*}
When $q\rightarrow 1$, all Haar state values goes to $-1/24$ which is consistent with 
\begin{equation*}
    \int_{SU(3)}U_{11}U_{32}U_{31}^*U_{12}^*\ dU=Wg(2,3)=-\frac{1}{3(3^2-1)}=-\frac{1}{24}.
\end{equation*}
\textbf{Example 3:}
\begin{equation*}
    \begin{split}
        h(x_{11}x_{11}x_{11}^*x_{11}^*)&=h(aaa^*a^*)=h(aa(ek-q\cdot fh)(ek-q\cdot fh))\\
        &=h(aaekek)-q\cdot h(aafhek)-q\cdot h(aaekfh)+q^2\cdot h(aafhfh)\\
        &=\frac{1}{(q^4+1)(q^4+q^2+1)}.
    \end{split}
\end{equation*}
\begin{equation*}
    \begin{split}
        h(x_{11}x_{11}^*x_{11}x_{11}^*)&=h(aa^*aa^*)=h(a(ek-q\cdot fh)a(ek-q\cdot fh))\\
        &=h(aekaek)-q\cdot h(afhaek)-q\cdot h(aekafh)+q^2\cdot h(afhafh)\\
        &=\frac{q^4-q^2+1}{(q^4+1)(q^4+q^2+1)}.
    \end{split}
\end{equation*}
When $q\rightarrow 1$, all Haar state values goes to $1/6$ which is consistent with 
\begin{equation*}
\begin{split}
    \int_{SU(3)}U_{11}^2(U_{11}^*)^2\ dU&=2Wg(1^2,3)+2Wg(2,3)\\
    &=2\frac{1}{3^2-1}-2\frac{1}{3(3^2-1)}=\frac{1}{6}.
\end{split}  
\end{equation*}

\end{document}